\newtheorem{theorem}{Theorem}[section]
\newtheorem{lemma}[theorem]{Lemma}
\newtheorem{corollary}[theorem]{Corollary}
\newtheorem{proposition}[theorem]{Proposition}
\newtheorem{conjecture}[theorem]{Conjecture}
\theoremstyle{definition}
\newtheorem{definition}[theorem]{Definition}
\newtheorem{example}[theorem]{Example}
\newtheorem{algorithm}[theorem]{Algorithm}
\newtheorem{remark}[theorem]{Remark}
\newtheorem{question}[theorem]{Question}
\newenvironment{sqcases}{%
  \matrix@check\sqcases\env@sqcases
}{%
  \endarray\right.%
}
\def\env@sqcases{%
  \let\@ifnextchar\new@ifnextchar
  \left\lbrack
  \def\arraystretch{1.2}%
  \array{@{}l@{\quad}l@{}}%
}
\DeclareMathOperator{\rank}{rank}
\DeclareMathOperator{\lcm}{lcm}
\DeclareMathOperator{\pd}{pd}
\DeclareMathOperator{\sbridge}{sb}
\DeclareMathOperator{\Taylor}{Taylor}
\DeclareMathOperator{\characteristic}{char}
\DeclareMathOperator{\mingens}{MinGens}
\DeclareMathOperator{\dist}{dist}
\newcommand{\E}{\mathcal{E}}
\newcommand{\ZZ}{{\mathbb Z}}
\newcommand{\NN}{{\mathbb N}}
\newcommand{\card}[1]{{\left\lvert #1 \right\lvert}}
\newcommand{\sunletsymb}[1]{%
    \begin{tikzpicture}[scale=1, %
    thick,
    baseline=-1ex
    ]
    \def\x{1mm};
    \def\theta{360/#1};

    \foreach \i in {1,...,#1}{
        \draw ({\x*cos(\theta*\i)}, {\x*sin(\theta*\i)}) -- ({\x*cos(\theta*(\i+1))}, {\x*sin(\theta*(\i+1))});
        \draw ({\x*cos(\theta*\i)}, {\x*sin(\theta*\i)}) -- ({2*\x*cos(\theta*\i)}, {2*\x*sin(\theta*\i)});
    }
    \end{tikzpicture}%
}
\newcommand{\sixcycleconsecutivesymb}{%
    \begin{tikzpicture}[scale=1, %
    thick,
    baseline=-1ex
    ]
    \def\x{1mm};
    \def\theta{60};

    \foreach \i in {1,...,6}{
        \draw ({\x*cos(\theta*\i)}, {\x*sin(\theta*\i)}) -- ({\x*cos(\theta*(\i+1))}, {\x*sin(\theta*(\i+1))});
    }
    \foreach \i in {-1,0,1}{
        \draw ({\x*cos(\theta*\i)}, {\x*sin(\theta*\i)}) -- ({2*\x*cos(\theta*\i)}, {2*\x*sin(\theta*\i)});
    }
    \end{tikzpicture}%
}
\newcommand{\sixcyclealternatesymb}{%
    \begin{tikzpicture}[scale=1, %
    thick,
    baseline=-1ex
    ]
    \def\x{1mm};
    \def\theta{60};

    \foreach \i in {1,...,6}{
        \draw ({\x*cos(\theta*\i)}, {\x*sin(\theta*\i)}) -- ({\x*cos(\theta*(\i+1))}, {\x*sin(\theta*(\i+1))});
    }
    \foreach \i in {-1,1,3}{
        \draw ({\x*cos(\theta*\i)}, {\x*sin(\theta*\i)}) -- ({2*\x*cos(\theta*\i)}, {2*\x*sin(\theta*\i)});
    }
    \end{tikzpicture}%
}
\newcommand{\dotsymb}[1]{%
	\begin{tikzpicture}[scale=0.5, %
		thick,
		baseline=-0.5ex
		]
		\def\x{15mm};
		\def\r{0.8mm};
		
		\draw[fill=black] (0, 0) circle (\r);
	\end{tikzpicture}%
}
\newcommand{\smallestcounterexamplesymb}{%
	\begin{tikzpicture}[scale=0.8, inner sep=0pt, baseline=0ex]
		\def\x{7mm};
		\def\r{0.45mm};
		
	    \node (t) at (-2*\x, 0) {};
	    \node (u) at (0, \x) {};
	    \node (v) at (\x, 0) {};
	    \node (w) at (0, 0) {};
	    \node (x) at (-\x, 0) {};
	    \node (y) at (0, -\x) {};
	    \node (z) at (2*\x, 0) {};

	    \draw[] (t) to (x);
	    \draw[] (x) to (w);
	    \draw[] (u) to (v);
	    \draw[] (u) to (x);
	    \draw[] (v) to (w);
	    \draw[] (y) to (v);
	    \draw[] (y) to (x);
	    \draw[] (v) to (z);

	    \draw[fill=black] (t) circle (\r);
	    \draw[fill=black] (u) circle (\r);
	    \draw[fill=black] (v) circle (\r);
	    \draw[fill=black] (w) circle (\r);
	    \draw[fill=black] (x) circle (\r);
	    \draw[fill=black] (y) circle (\r);
	    \draw[fill=black] (z) circle (\r);
	\end{tikzpicture}%
}
\def\E{{\mathcal E}}
\def\H{{\mathcal H}}
\def\x{{\bf x}}
\def\1{{\bf 1}}
\def\0{{\bf 0}}
\def\r{{\mathbf r}^{\mathbb L}}
\begin{document}
\title{\textbf{Monomial ideals with minimal generalized Barile-Macchia resolutions}}
\author{Trung Chau}
\address{Chennai Mathematical Institute, Chennai, 603103, India}
\email{chauchitrung1996@gmail.com}

\author{T\`ai Huy H\`a}
\address{Tulane University, Mathematics Department, 6823 St. Charles Avenue, New Orleans, LA 70118, USA}
\email{tha@tulane.edu}

\author{Aryaman Maithani}
\address{Department of Mathematics, University of Utah, 155 South 1400 East, Salt Lake City, UT~84112, USA}
\email{maithani@math.utah.edu}

\maketitle

\begin{abstract}
   We identify several classes of monomial ideals that possess minimal generalized Barile-Macchia resolutions. These classes of ideals include generic monomial ideals, monomial ideals with linear quotients, and edge ideals of hypertrees. We also characterize connected unicyclic graphs whose edge ideals are bridge-friendly and, in particular, have minimal Barile-Macchia resolutions. Barile-Macchia and generalized Barile-Macchia resolutions are cellular resolutions and special types of Morse resolutions.
\end{abstract}



\section{Introduction}

Understanding when a monomial ideal admits a \emph{cellular resolution} and determining explicit descriptions of such resolutions are challenging problems that have been extensively studied (cf.~\cite{AFG2020, BM20, BW02, BPS98, BS98, CHM24-first, CK24, CT2016, CEFMMSS21, CEFMMSS22, Ly88, OY2015, Vel08}). Only a few general constructions exist, such as the \emph{Taylor resolution}, the \emph{Lyubeznik resolution}, the \emph{Morse resolution} and, in special cases, the \emph{Scarf complex} (see~\cite{BW02, BPS98, Ly88, Tay66}). 

In 2002, Batzies and Welker~\cite{BW02} applied discrete Morse theory to provide Morse resolutions of monomial ideals and, in particular, introduced what is now referred to as the \emph{generalized} Lyubeznik resolution. More recently, Chau and Kara~\cite{CK24}, building on a prior work of Barile and Macchia~\cite{BM20}, developed the Barile-Macchia and generalized Barile-Macchia resolutions, which are also special classes of Morse resolutions. However, determining when these resolutions are \emph{minimal} remains an important unresolved problem. Specifically, there is growing interest in finding classes of monomial ideals that admit minimal Taylor, generalized Lyubeznik, generalized Barile-Macchia, or Morse resolutions (see~\cite{CHM24-first, CKW24, CEFMMSS21, CEFMMSS22, FHHM24}).

In this paper, we identify several classes of monomial ideals that possess \emph{minimal generalized Barile-Macchia resolutions}. Our results parallel those of Batzies and Welker~\cite{BW02}, who showed that monomial ideals that are generic or have linear quotients admit minimal generalized Lyubeznik resolutions. The notion of \emph{generic} monomial ideals that we use came from~\cite{MSY00}, which is more inclusive than that given in~\cite{BPS98}. Most monomial ideals are generic, in the sense that they form a Zariski-open subset in the matrix space of exponents. On the other hand, monomial ideals with \emph{linear quotients} have been widely studied (cf.~\cite{HH2011Text} and references therein) due to their deep connections to combinatorial structures. Notably, for squarefree monomial ideals, the property of having linear quotients corresponds to the associated Stanley-Reisner simplicial complex being \emph{shellable}.
Leaving precise notations and terminology until later, our results are stated as follows.

\noindent\textbf{Theorems \ref{thm:generic-BM} and \ref{thm:shellable-BM}.} Let $S = \Bbbk[x_1, \dots, x_d]$ be a polynomial ring over a field $\Bbbk$ and let $I \subseteq S$ be a monomial ideal. Suppose that $I$ is either generic or has linear quotients. Then, $S/I$ has a minimal generalized Barile-Macchia resolution.

To prove Theorems \ref{thm:generic-BM} and \ref{thm:shellable-BM}, we show that in these cases the generalized Barile-Macchia resolution coincides with the generalized Lyubeznik resolution. This enables us to employ the results of~\cite{BW02}. As observed from discrete Morse theory, both the generalized Lyubeznik and the generalized Barile-Macchia resolutions are constructed from the Taylor resolution by identifying \emph{acyclic matchings} and \emph{critical sets}. The criteria for these acyclic matchings and critical sets are given in Theorems~\ref{thm:generalized-Lyubeznik} and~\ref{thm:generalized-BM}. The proofs are completed by showing that for generic monomial ideals and monomial ideals with linear quotients, the acyclic matchings and critical sets to construct the generalized Lyubeznik and generalized Barile-Macchia resolutions are the same; see Lemma~\ref{lem:Lyu=BM}.

Next, we focus on squarefree monomial ideals. These ideals can be viewed as the \emph{edge ideals} of graphs and hypergraphs. For graphs particularly, computational experiments show that edge ideals of graphs with at most 8 vertices have minimal generalized Barile-Macchia resolutions; see Theorem~\ref{thm:generalized-BM-graphs}. Furthermore, it is a consequence of our results in Section~\ref{sec.hypertree} --- see Theorem~\ref{thm:rooted-hypertree-friendly} below --- that the edge ideals of trees have minimal Barile-Macchia resolutions. An important class of connected graphs that are not trees consists of \emph{unicyclic} graphs, the graphs containing exactly one cycle. Our results identify unicyclic graphs whose edge ideals have minimal Barile-Macchia resolutions. To achieve this, we look at the stronger, but better manageable, property of being \emph{bridge-friendly}. We characterize the unicyclic graphs whose edge ideals are bridge-friendly. Again, leaving precise notations and terminology until later, our result is stated as follows.

\noindent\textbf{Theorem~\ref{thm:bridgefriendly-unicyclic}.} Let $G$ be a connected unicyclic graph. Then, $I(G)$ is bridge-friendly if and only if either 
\vspace*{-0.2cm}\begin{enumerate} 
\item $G$ contains a $C_3$ or a $C_5$ with one vertex of degree 2; or
\item $G$ contains a $C_6$ with two opposite vertices of degree 2.
\end{enumerate}

Theorem~\ref{thm:bridgefriendly-unicyclic} is proved in two steps. First, we exhibit a collection of ``forbidden structures'' for being bridge-friendly, i.e., small graphs whose edge ideals are not bridge-friendly; see Proposition~\ref{prop:non-bridgefriendly-graphs}. This, coupled with~\cite[Proposition 4.2]{CHM24-first}, shows that if $I(G)$ is bridge-friendly then $G$ must be of the prescribed forms in the statement of the theorem. The last step is to show that if $G$ is of one of the prescribed forms, then $I(G)$ is bridge-friendly. 

In the more general context of hypergraphs, our approach is based on the notion of \emph{host} graphs associated to a given hypergraph. The concept of host graphs arrives from optimization theory (cf.~\cite{Ber90, BDCV98}). A hypergraph is called a \emph{rooted hypertree} if it has a host graph that is a tree with the property that each edge of the hypergraph consists of vertices of different distances from a fixed vertex $x$ (the \emph{root}). The class of rooted hypertrees contains trees and rooted trees, and these have been much examined (cf.~\cite{BM20, BHO11,CK24}). 

Our last result shows that edge ideals of rooted hypertree has a minimal Barile-Macchia resolution.

\noindent\textbf{Theorem~\ref{thm:rooted-hypertree-friendly}.} Let $\H$ be a rooted hypertree. Then, its edge ideal $I(\H)$ has a minimal Barile-Macchia resolution.

The proof of Theorem~\ref{thm:rooted-hypertree-friendly} is based on a combinatorial analysis of the host graphs of hypergraphs. Particularly, the structure of the host tree of a rooted hypertree $\H$ allows us to define a rank function on the vertices of $\H$, which results in a total order of the minimal generators of $I(\H)$. The proof proceeds by showing that the Barile-Macchia resolution of $I(\H)$ with respect to this total order of the generators is minimal, making use of previous characterizations from~\cite{CHM24-first, CK24}.

The structure of the paper is as follows. In Section~\ref{sec.Morse}, we provide background on discrete Morse theory and the construction of generalized Lyubeznik and generalized Barile-Macchia resolutions. Section~\ref{sec.gen_shellable} focuses on monomial ideals that are generic or have linear quotients, proving Theorems~\ref{thm:generic-BM} and~\ref{thm:shellable-BM}. Section~\ref{sec.edgeIdeal} studies edge ideals of graphs, particularly unicyclic graphs and graphs with few vertices, establishing Theorems~\ref{thm:generalized-BM-graphs} and~\ref{thm:bridgefriendly-unicyclic}. Section~\ref{sec.hypertree} investigates rooted hypertrees and culminates with the proof of Theorem~\ref{thm:rooted-hypertree-friendly}.

\medskip

\noindent\textbf{Acknowledgment.} The first and third authors were partially supported by NSF grants DMS 1801285 and 2101671. The first author was also partially supported by the NSF grant DMS 2001368. The second author acknowledges support from a Simons Foundation grant. The third author made extensive use of the computer algebra systems \texttt{SageMath} \cite{sagemath} and \texttt{Macaulay2} \cite{M2}, and the package \texttt{nauty} \cite{nauty}; the use of these is gratefully acknowledged. 


\section{Discrete Morse Theory and Lyubeznik/Barile-Macchia Resolutions} \label{sec.Morse}

In this section, we give a brief overview of how to apply discrete Morse theory to the Taylor resolution of any monomial ideal to construct its generalized Lyubeznik and generalized Barile-Macchia resolutions. Throughout the section, $S = \Bbbk[x_1, \dots, x_d]$ denotes a polynomial ring over a field $\Bbbk$ and $I$ is a monomial ideal in $S$. Let $\Taylor(I)$ represent the Taylor resolution of $I$ (see~\cite{Tay66}).

Let $\mingens(I)$ be the set of minimal monomial generators of $I$, and let $\mathcal{P}(\mingens(I))$ be its power set. Observe that $S$ can be viewed as an $\NN^d$-graded ring, so by taking the $\ZZ^d$-degrees of monomials in $S$, the least common multiple operation defines a map on $\mathcal{P}(\mingens(I))$. We denote this map by $\lcm$, i.e.,
\begin{align*} 
\lcm \colon \mathcal{P}(\mingens(I)) &\longrightarrow \ZZ^d \\
\sigma &\mapsto \deg(\lcm(m ~\big|~ m \in \sigma)).
\end{align*}
Let $P$ be a \emph{poset} and let $f\colon \mathcal{P}(\mingens(I))\to P$ be an order-preserving map, where $\mathcal{P}(\mingens(I))$ is considered as a poset with respect to inclusion. We call $f$ an \emph{lcm-compatible $P$-grading of $\Taylor(I)$} if that there exists a commutative diagram of order-preserving maps
    \[
    \begin{tikzcd}
        \mathcal{P}(\mingens(I)) \arrow[d, "\lcm"] \arrow[rd, "f"]
        & \\
        \mathbb{Z}^d
        & \arrow[l, dashed, "g"]  P.
    \end{tikzcd}
    \]
We also associate to $I$ a directed graph $G_I=(V,E)$, whose vertex and edge sets are
\[
V=\{\sigma \mid \sigma \subseteq \mingens(I)\}
\]
and
\[
E=\{\sigma\to \tau  \mid \tau\subset \sigma \text{ and }|\tau|=|\sigma|-1 \}.
\]

The main objects of discrete Morse theory are defined as follows. 

\begin{definition} \label{def.Morse}
A collection of edges $A\subseteq E$ in $G_I$ is called an \emph{$f$-homogeneous acyclic matching} if the following conditions hold:
    \begin{enumerate}
        \item Each vertex of $G_I$ appears in at most one edge of $A$.
        \item For each directed edge $\sigma\to \tau$ in $A$, we have $f(\sigma)=f(\tau)$.
        \item The directed graph $G_I^A$ --- which is $G_I$ with edges in $A$ being reversed --- is acyclic, i.e., $G_I^A$ does not have any directed cycle.
    \end{enumerate}
\end{definition}

For an $f$-homogeneous acyclic matching $A \subseteq E$ in $G_I$, the subsets of $\mingens(I)$ that are not in any edge of $A$ are called \emph{$A$-critical}. When there is no confusion, we will simply use the term \emph{critical}. The main result of discrete Morse theory essentially says that the critical subsets of $\mingens(I)$ form a free resolution for $S/I$. 

\begin{theorem}[{\cite[Propositions 2.2 and 3.1]{BW02}}] \label{thm:morse-resolutions}
    Let $P$ be a poset, let $f$ be an lcm-compatible $P$-grading of $\Taylor(I)$, and let $A$ be an $f$-homogeneous acyclic matching in $G_I$. Then, $A$ induces a free resolution $\mathcal{F}_A$ of $S/I$, which we call the \emph{Morse resolution} with respect to $A$. Moreover, for each integer $i \ge 0$, a basis of $(\mathcal{F}_A)_i$ can be identified with the collection of critical subsets of $\mingens(I)$ with exactly $i$ elements.   
\end{theorem}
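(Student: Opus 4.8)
The plan is to realize $\Taylor(I)$ as a cellular (indeed simplicial) free resolution of $S/I$ supported on the full simplex on the vertex set $\mingens(I)$, and then to run algebraic discrete Morse theory along the matching $A$; this is essentially the argument of~\cite{BW02}. Recall the relevant structure: in $\Taylor(I)$ the free module in homological degree $i$ has basis $\{e_\sigma : \sigma \subseteq \mingens(I),\ |\sigma| = i\}$, the generator $e_\sigma$ is homogeneous of $\ZZ^d$-degree $\lcm(\sigma)$, and the differential has the shape $\partial(e_\sigma) = \sum_{m \in \sigma} \pm\, u_{\sigma, m}\, e_{\sigma \setminus \{m\}}$, where $u_{\sigma, m}$ is the monomial whose degree is $\lcm(\sigma) - \lcm(\sigma \setminus \{m\}) \in \ZZ^d$.

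The first step is to use the $f$-homogeneity of $A$ to promote its matched entries of $\partial$ to units. If $\sigma \to \tau$ is an edge of $A$, then $f(\sigma) = f(\tau)$; applying the order-preserving map $g \colon P \to \ZZ^d$ supplied by the lcm-compatible $P$-grading gives $\lcm(\sigma) = g(f(\sigma)) = g(f(\tau)) = \lcm(\tau)$, so the Taylor coefficient of $e_\tau$ in $\partial(e_\sigma)$ --- namely $\pm u_{\sigma, m}$ with $\tau = \sigma \setminus \{m\}$ --- is a monomial of degree $\lcm(\sigma) - \lcm(\tau) = 0$ and hence equals $\pm 1$. This invertibility is exactly what is needed in order to cancel the pair $(\sigma, \tau)$ by a $\ZZ^d$-graded Gaussian elimination that never leaves the category of free graded $S$-modules.

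Next I would invoke the Morse reduction. Since $\mingens(I)$ is finite, $G_I$ is a finite directed graph, and the acyclicity of $G_I^A$ ensures there are only finitely many directed (``gradient'') paths in $G_I^A$ between any two critical subsets; this lets one define the Morse differential $\partial^A$ on the free $\ZZ^d$-graded module $\mathcal{F}_A$ with basis the critical subsets, by summing over such paths the signed products of the inverted matched coefficients and the ordinary Taylor coefficients encountered along the way. The core of algebraic discrete Morse theory --- in the form used in~\cite{BW02}, resting ultimately on Forman's discrete Morse theory and its algebraic incarnation --- then asserts that $(\mathcal{F}_A, \partial^A)$ is chain-homotopy equivalent to $\Taylor(I)$. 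Since chain-homotopy equivalences preserve homology, $\mathcal{F}_A$ is acyclic in positive homological degrees and $H_0(\mathcal{F}_A) \cong S/I$; that is, $\mathcal{F}_A$ is a free resolution of $S/I$, and by construction a basis of $(\mathcal{F}_A)_i$ is the collection of critical subsets of size $i$.

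The step I expect to be the main obstacle is the bookkeeping inside the Morse reduction: one must check that cancelling a matched pair $(\sigma, \tau)$ across the unit entry $\partial_{\tau, \sigma} = \pm 1$ again yields a complex of $\ZZ^d$-graded free modules with homogeneous differential (here $f(\sigma) = f(\tau)$ is used once more), that this new complex remains quasi-isomorphic to the old one, and that iterating the cancellations over all of $A$ terminates in a complex independent of the order of cancellation --- and it is precisely the acyclicity of $G_I^A$, together with the finiteness of $\mingens(I)$, that guarantees both the well-definedness of the gradient-path sums and this order-independence. All of this is carried out in~\cite{BW02}; granting it, the theorem follows.
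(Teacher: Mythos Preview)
The paper does not give its own proof of this theorem: it is stated with a citation to \cite[Propositions~2.2 and~3.1]{BW02} and no proof environment follows. Your sketch is a faithful outline of the Batzies--Welker argument itself---realize $\Taylor(I)$ as the simplicial resolution on the full simplex, use $f$-homogeneity plus the map $g$ to see that matched edges carry unit coefficients, and then apply the Morse cancellation/gradient-path machinery---so there is nothing to compare beyond noting that you have reproduced the cited proof rather than an alternative one.
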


It can be seen that Morse resolutions are contained in the Taylor resolution. Furthermore, the bigger the $f$-homogeneous acyclic matchings are, the smaller the induced Morse resolutions will be. 

\begin{remark}\label{rem:characteristic-free}
   Morse resolutions are cellular~\cite[Proposition 1.2]{BW02} and independent of $\characteristic \Bbbk$.
\end{remark}

We now recall two different constructions for $f$-homogeneous acyclic matchings on $\Taylor(I)$, which result in the generalized Lyubeznik and generalized Barile-Macchia resolutions. 

\begin{theorem}[{\cite[Theorem 3.2]{BW02}}]
    \label{thm:generalized-Lyubeznik}
    Let $P$ be a poset, let $f$ be an lcm-compatible $P$-grading of $\Taylor(I)$, and let $(\succ_p)_{p\in P}$ be a sequence of total orders on $\mingens(I)$.    
    
    For $\sigma=\{ m_1 \succ_{f(\sigma)} \cdots \succ_{f(\sigma)} m_q \}$, we define
   \begin{align*}
        v_L(\sigma)\coloneqq  \sup \big\{ k\in \mathbb{N} \mid \;
        & \exists m \in \mingens(I) \text{ such that } m_k \succ_{f(p)} m \text{ for } k\in [q] \\
        & \text{ and } m \mid \lcm(m_1, \ldots, m_k) \big\}.
   \end{align*}
   
    If $v_L(\sigma)\neq -\infty$, set
   \[m_L(\sigma)\coloneqq \min_{\succ_{f(\sigma)}} \{m\in \mingens(I) ~\big\vert~ m\mid \lcm(m_1,\dots, m_{v_L(\sigma)}) \}.\]
    For each $p\in P$ set
    \[
    A_p\coloneqq \{(\sigma\cup \{m_L(\sigma)\})\to (  \sigma \setminus \{m_L(\sigma)\}) \mid  f(\sigma)=p \text{ and }v_L(\sigma)\neq - \infty \}.
    \]
    Assume that $f(\sigma \setminus \{m_L(\sigma)\}) = f(\sigma\cup \{m_L(\sigma)\})$ holds for each $\sigma \subseteq \mingens(I)$ for which $m_L(\sigma)$ exists. Then $A=\bigcup_{p\in P}A_p$ is an $f$-homogeneous acyclic matching. Hence, it induces a free resolution of $S/I$, called a \emph{generalized Lyubeznik resolution}.
\end{theorem}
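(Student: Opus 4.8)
The plan is to verify directly from Definition~\ref{def.Morse} that $A=\bigcup_{p\in P}A_p$ is an $f$-homogeneous acyclic matching; the existence of the induced free resolution $\F_A$ of $S/I$ then follows at once from Theorem~\ref{thm:morse-resolutions}. Homogeneity is essentially built into the statement: every edge of $A_p$ has the form $(\sigma\cup\{m_L(\sigma)\})\to(\sigma\setminus\{m_L(\sigma)\})$, its two endpoints are comparable under inclusion and have equal $f$-value by the standing hypothesis $f(\sigma\setminus\{m_L(\sigma)\})=f(\sigma\cup\{m_L(\sigma)\})$, and since $f$ is order-preserving this common value must equal $f(\sigma)=p$. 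In particular an edge of $A_p$ only involves subsets lying in the fiber $f^{-1}(p)$, so it suffices to analyze each fiber separately, working throughout with the single total order $\succ_p$.

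The technical heart is a stability lemma, which I would establish first: if $\sigma\subseteq\mingens(I)$ has $v_L(\sigma)\neq-\infty$ and $m\coloneqq m_L(\sigma)$, and $\sigma'$ denotes the endpoint of the $A_{f(\sigma)}$-edge determined by $\sigma$ other than $\sigma$ itself (so $\sigma'=\sigma\cup\{m\}$ if $m\notin\sigma$, and $\sigma'=\sigma\setminus\{m\}$ if $m\in\sigma$), then $v_L(\sigma')=v_L(\sigma)$ and $m_L(\sigma')=m$. The elementary observation driving this is that $m=m_L(\sigma)$ divides $\lcm(m_1,\dots,m_{v_L(\sigma)})$ and, being the $\succ_p$-minimal generator with this property, satisfies $m\prec_p m_j$ for every $j\le v_L(\sigma)$; hence inserting or deleting $m$ leaves the first $v_L(\sigma)$ entries of the $\succ_p$-sorted list undisturbed, so the corresponding $\lcm$ is unchanged. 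One inequality in $v_L(\sigma')=v_L(\sigma)$ is then immediate, and the reverse requires a short case analysis on the position $m$ could occupy inside the sorted list of $\sigma'$ (respectively $\sigma$): each case is closed either by exhibiting a witness for the value of $v_L$ on the other side, or by contradicting the $\succ_p$-minimality of $m_L(\sigma)$. The equality $m_L(\sigma')=m$ is then formal. Note that the hypothesis $f(\sigma')=f(\sigma)$ is precisely what guarantees $\succ_{f(\sigma')}=\succ_{f(\sigma)}$, so that the two sorted lists are compatible.

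Granting the stability lemma, the matching property follows formally. Each edge of $A$ is an unordered pair $\{\rho^-,\rho^+\}$ with $\rho^+=\rho^-\cup\{m\}$, $m\notin\rho^-$, and $m=m_L(\rho^+)=m_L(\rho^-)$; the two ways of producing it (from $\rho^+$, where $m\in\rho^+$, or from $\rho^-$, where $m\notin\rho^-$) yield the same edge by the lemma. If a subset $\tau$ belonged to two edges of $A$, then both would lie in $A_{f(\tau)}$, since the endpoints of any edge of $A$ share a common $f$-value; writing each as above, the partner of $\tau$ is uniquely $\tau\setminus\{m_L(\tau)\}$ when $m_L(\tau)\in\tau$ and uniquely $\tau\cup\{m_L(\tau)\}$ when $m_L(\tau)\notin\tau$, and these alternatives are mutually exclusive, so the two edges coincide.

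It remains to show that $G_I^A$ is acyclic, which I expect to be the main obstacle. Suppose a directed cycle existed. Since $G_I$ is graded by cardinality and only $A$-edges are reversed, the cycle must alternate reversed-$A$ steps $\tau\to\sigma$ (raising cardinality by one, with $\sigma\to\tau\in A$) and ordinary steps $\sigma\to\tau'$ (lowering cardinality by one, with $\sigma\to\tau'\notin A$); so it has the form $\tau_0\to\sigma_1\to\tau_1\to\cdots\to\sigma_k\to\tau_k=\tau_0$ with the $\sigma_i$ all of one size and the $\tau_i$ all of the size one smaller. Each reversed-$A$ step forces $f(\tau_{i-1})=f(\sigma_i)$ while each ordinary step only gives $f(\tau_i)\le f(\sigma_i)$, so going once around yields $f(\tau_0)\ge f(\tau_1)\ge\cdots\ge f(\tau_0)$, hence the whole cycle lies in one fiber $f^{-1}(p)$. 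The stability lemma then identifies, for each $i$, the added element $n_{i-1}=m_L(\sigma_i)=m_L(\tau_{i-1})$ with $\sigma_i=\tau_{i-1}\cup\{n_{i-1}\}$, and the removed element $x_i$ with $\sigma_i=\tau_i\cup\{x_i\}$; since $\sigma_i\to\tau_i$ is not the $A$-edge out of $\sigma_i$ (which runs to $\sigma_i\setminus\{n_{i-1}\}=\tau_{i-1}$), we get $x_i\neq n_{i-1}$, so the cycle is recorded by elementary swaps $\sigma_{i+1}=(\sigma_i\setminus\{x_i\})\cup\{n_i\}$. The plan is then to exhibit a quantity that strictly changes in one direction along these swaps --- concretely, to prove $n_i\succ_p x_i$ for every $i$ (using that $n_i=m_L(\tau_i)$ is the $\succ_p$-minimal generator dividing a fixed $\lcm$, while $x_i$ is not among the leading entries producing that $\lcm$), and to deduce that the $\succ_p$-sorted tuple of $\sigma_i$ strictly increases lexicographically at each step, or, alternatively, to track the $\succ_p$-smallest generator ever added along the cycle; traversing the cycle then gives $\sigma_1$ compared to itself, a contradiction. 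Pinning down the inequality $n_i\succ_p x_i$ in every configuration --- in particular disentangling the interaction between the leading $v_L$-segment and the swapped elements --- is the delicate point, and is exactly where the $\succ_p$-minimality in the definition of $m_L$, together with the stability lemma, must be pushed hardest.
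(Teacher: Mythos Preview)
The paper does not prove this theorem at all: it is quoted verbatim as \cite[Theorem~3.2]{BW02} and immediately followed by commentary, with no \texttt{proof} environment. There is therefore no ``paper's own proof'' to compare your proposal against; the authors treat the result as an external input and only use its statement.

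That said, your outline is the standard route to this result and tracks the original Batzies--Welker argument closely: one checks homogeneity from the hypothesis, proves a stability lemma showing $m_L$ and $v_L$ are unchanged when passing between matched faces, deduces that $A$ is a genuine matching, and then rules out directed cycles in $G_I^A$ by showing that along any putative cycle a $\succ_p$-based invariant moves monotonically. Your identification of the delicate step --- the inequality between the element added by the reversed $A$-edge and the element removed by the subsequent non-$A$ edge --- is accurate; in \cite{BW02} this is handled by showing that the $\succ_p$-minimum of the added elements along the cycle can never be removed, contradicting periodicity. Your sketch is consistent with that strategy, though the acyclicity paragraph remains a plan rather than a proof: the claimed inequality $n_i \succ_p x_i$ is not established, and the interaction between $v_L$ and the swap is exactly where care is needed. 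If you intend to include a self-contained proof, that inequality (or the equivalent ``minimum added element is never removed'' argument) is the part that must be written out in full.
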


The construction in Theorem \ref{thm:generalized-Lyubeznik} generalizes that given by Lyubeznik~\cite{Ly88}. The \emph{Lyubeznik resolution} (with respect to a fixed total order $(\succ)$ on $\mingens(I)$) is exactly the generalized Lyubeznik resolution when $P=\mathbb{Z}^d$, $f=\lcm$, and $(\succ_p) = (\succ)$ for all $p\in P$.

\begin{definition} \label{defi:types}
Fix a total order $(\succ)$ on $\mingens(I)$.
\begin{enumerate} 
    \item Given $\sigma\subseteq \mingens(I)$ and $m\in \mingens(I)$  such that $\lcm(\sigma \cup \{m\})=\lcm(\sigma\setminus \{m\})$, we say that $m$ is a \emph{bridge} of $\sigma$ if $m\in \sigma$.
    \item If $m\succ m'$ where $m,m'\in \mingens(I)$, we say that $m$ \emph{dominates} $m'$.
    \item The \emph{smallest bridge function} is defined to be
    \[
    \sbridge: \mathcal{P}(\mingens(I))\to \mingens(I) \sqcup \{\emptyset\}
    \]
    where $\sbridge(\sigma)$ is the smallest bridge of $\sigma$ (with respect to $(\succ)$) if $\sigma$ has a bridge and $\emptyset$ otherwise.
\end{enumerate}
\end{definition}

\begin{algorithm}\label{algorithm1}
    {\sf Let $A=\emptyset$. Set $\Omega\subseteq \{\text{all  subsets of } \mingens(I) \text{ with cardinality at least } 3\}.$
    \begin{enumerate}[label=(\arabic*)]
        \item Pick a subset $\sigma$ of maximal cardinality in $\Omega$. 
        \item  Set
        \[ \Omega \coloneqq \Omega \setminus \{\sigma, \sigma \setminus \{\sbridge(\sigma)\}\}. \]
        If  $\sbridge(\sigma)\neq \emptyset$, add the directed edge $\sigma \to (\sigma \setminus \{\sbridge(\sigma)\})$ to $A$. \newline
        If $\Omega\neq \emptyset$, return to step (1).
        \item Whenever there exist distinct directed edges $\sigma \to (\sigma \setminus \{\sbridge(\sigma)\})$ and $\sigma'\to ( \sigma' \setminus \{\sbridge(\sigma')\})$ in $A$ such that 
            $$\sigma \setminus \{\sbridge(\sigma)\} = \sigma' \setminus \{\sbridge(\sigma')\},$$
            then 
            \begin{itemize}
                \item if $ \sbridge(\sigma') \succ \sbridge(\sigma)$, remove $\sigma'\to ( \sigma' \setminus \{\sbridge(\sigma')\})$ from $A$,
                \item else remove $\sigma\to ( \sigma \setminus \{\sbridge(\sigma)\})$ from $A$.
            \end{itemize}
        
        \item Return $A$.
    \end{enumerate}}
\end{algorithm}

\begin{theorem}[{\cite[Theorem 5.18]{CK24}}]
    \label{thm:generalized-BM} 
     Let $P$ be a poset, $f$ an lcm-compatible $P$-grading of $\Taylor(I)$, and $(\succ_p)_{p\in P}$ a sequence of total orderings of $\mingens(I)$. For $\sigma \subseteq \mingens(I)$, we set the notation
     \[
     \sbridge(\sigma) \coloneqq \sbridge_{\succ_{f(\sigma)}}(\sigma).
     \]
     Assume $f(\sigma \setminus \{\sbridge(\sigma)\} )=f(\sigma)$ for any subset $\sigma$ of $\mingens(I)$. For each $p\in P$, let $A_p$ be the $f$-homogeneous acyclic matching obtained by applying Algorithm~\ref{algorithm1} to the set $f^{-1}(p)$ imposed with the total ordering $(\succ_p)$. Then $A=\bigcup_{p\in P}A_p$ is an $f$-homogeneous acyclic matching. Hence, it induces a free resolution of $S/I$, called a \emph{generalized Barile-Macchia resolution}.
\end{theorem}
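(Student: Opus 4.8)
The plan is to check directly that $A=\bigcup_{p\in P}A_p$ satisfies the three conditions of Definition~\ref{def.Morse}, after which Theorem~\ref{thm:morse-resolutions} supplies the induced free resolution of $S/I$; I would reduce each condition to a statement inside a single fiber $f^{-1}(p)$. Condition~(2) is immediate: every edge of $A$ lies in some $A_p$ and hence has the form $\sigma\to(\sigma\setminus\{\sbridge(\sigma)\})$ with $f(\sigma)=p$, so the standing hypothesis $f(\sigma\setminus\{\sbridge(\sigma)\})=f(\sigma)$ makes that edge $f$-homogeneous; the same hypothesis shows $\sigma\setminus\{\sbridge(\sigma)\}\in f^{-1}(p)$, so $A_p$ is genuinely a set of edges of the subgraph of $G_I$ induced on $f^{-1}(p)$.

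For condition~(1) I would argue in two steps. Across fibers there is nothing to do: as $f$ is a function, the fibers $f^{-1}(p)$ partition the vertex set of $G_I$, and both endpoints of any edge of $A_p$ lie in $f^{-1}(p)$, so no vertex can meet edges from two different $A_p$'s. Within one fiber, one inspects Algorithm~\ref{algorithm1}: subsets are treated in non-increasing order of cardinality, and in step~(2) both $\sigma$ and $\sigma\setminus\{\sbridge(\sigma)\}$ leave $\Omega$ the moment $\sigma$ is handled, so no set is ever used twice as the larger endpoint of an added edge, and no set that has already served as the smaller endpoint of an edge can be picked later; step~(3) then deletes, among any two added edges with the same smaller endpoint, all but one. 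Hence each $A_p$, and therefore $A$, is a matching.

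The real content is condition~(3), acyclicity of $G_I^A$. I would first confine any directed cycle to one fiber: along an original edge $\sigma\to\tau$ one has $\tau\subsetneq\sigma$, hence $f(\tau)\le f(\sigma)$ since $f$ is order-preserving, while along a reversed edge of $A$ the value of $f$ is unchanged by~(2); thus $f$ is non-increasing along every directed path of $G_I^A$ and must be constant on a directed cycle. Since any reversed edge joining two vertices of $f^{-1}(p)$ belongs to $A_p$, such a cycle is a directed cycle of the subgraph of $G_I^A$ on $f^{-1}(p)$, which is precisely $G_I$ restricted to $f^{-1}(p)$ with the edges of $A_p$ reversed. It therefore suffices to prove that, for a single fixed total order on a set of monomials, the matching returned by Algorithm~\ref{algorithm1} is acyclic; that is, the theorem reduces to the ungeneralized Barile--Macchia case.

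This last step is where I expect the main difficulty to lie, and I would handle it by the smallest-bridge argument of~\cite{BM20} as developed in~\cite{CK24, CHM24-first}. Assuming $G_I^A$ has a directed cycle, the standard reduction for matchings on graded posets lets one take it alternating, $\tau_0\to\sigma_0\to\tau_1\to\cdots\to\tau_{k-1}\to\sigma_{k-1}\to\tau_0$, where $\tau_i\to\sigma_i$ is a reversed edge (so $\sigma_i=\tau_i\cup\{b_i\}$ with $b_i=\sbridge(\sigma_i)$) and $\sigma_i\to\tau_{i+1}$ is an original edge deleting some $c_i\in\sigma_i$ with $c_i\ne b_i$. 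One then follows how the smallest bridge evolves from $\sigma_i$ to $\sigma_{i+1}=(\sigma_i\setminus\{c_i\})\cup\{b_{i+1}\}$: using that $b_i$ is the smallest bridge of $\sigma_i$ and that deleting the non-selected element $c_i$ and adjoining $b_{i+1}$ leaves the relevant least common multiples intact, one forces a strict monotonicity of the $b_i$ (or of the $\succ$-maximum of an associated set) around the cycle, contradicting $\sigma_k=\sigma_0$. The delicate point is the compatibility with step~(3) of the algorithm: a downward edge $\sigma_i\to\tau_{i+1}$ of $G_I^A$ may be one whose ``partner'' edge into $\tau_{i+1}$ was deleted in step~(3), and verifying that the monotonicity is unaffected by these deletions is exactly the bookkeeping carried out in~\cite{CK24}.
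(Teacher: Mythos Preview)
The paper does not supply its own proof of this theorem; it is quoted from \cite[Theorem~5.18]{CK24} and used as a black box, so there is nothing in the present paper to compare your argument against.

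Your outline is nevertheless the correct one and matches how the result is established in \cite{CK24}. Conditions~(1) and~(2) of Definition~\ref{def.Morse} are routine as you say, and your reduction of condition~(3) to a single fiber is exactly right: $f$ is weakly decreasing along every edge of $G_I^A$ and constant along reversed edges, hence constant on any directed cycle, which therefore lies in a single $f^{-1}(p)$ and uses only reversed edges from $A_p$. Your assertion that such a cycle must alternate is also correct and needs no further ``reduction'': the cyclic sequence of rank increments sums to zero and contains no two consecutive $+1$'s (by the matching condition), which forces strict alternation. The remaining work is the smallest-bridge monotonicity inside a fiber, and you correctly flag step~(3) of Algorithm~\ref{algorithm1} as the delicate point; that bookkeeping is exactly what \cite{CK24} carries out, so deferring to it is appropriate here.
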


Similar to what we have seen with Lyubeznik resolutions, a \emph{Barile-Macchia resolution} (with respect to a fixed total order $(\succ)$ on $\mingens(I)$) is exactly the generalized Barile-Macchia resolution when $P=\mathbb{Z}^d$, $f=\lcm$, and $(\succ_p)=(\succ)$ for any $p\in P$.

Observe that, while Barile-Macchia algorithm matches subsets of $\mingens(I)$ with a priority based on their cardinality, the matchings that induce generalized Lyubeznik resolutions are established regardless of the order they are matched. Thus, we can assume that generalized Lyubeznik resolutions are induced using the same rules as generalized Barile-Macchia resolutions in Theorem~\ref{thm:generalized-BM}, where a modification of Algorithm~\ref{algorithm1} is used. The change is simple: replace $\sbridge$ with $m_L$.
The two constructions are similar in the sense that they have almost the same inputs, and that they coincide in some important cases. 

\begin{lemma}\label{lem:Lyu=BM}
    Let $P$ be a poset, $f$ an lcm-compatible $P$-grading of $\Taylor(I)$, and $(\succ_p)_{p\in P}$ a sequence of total orderings of $\mingens(I)$. Assume that for each $\sigma \subseteq \mingens(I)$ where $m_L(\sigma)$ exists, we have $f(\sigma \setminus \{m_L(\sigma)\} )=f(\sigma\cup \{m_L(\sigma)\})$. Assume that the corresponding generalized Lyubeznik resolution of $S/I$ is minimal and $m_L(\sigma)=\sbridge(\sigma)$ for any $\sigma\subseteq \mingens(I)$ where $m_L(\sigma)$ exists and is in $\sigma$. Then, the corresponding generalized Barile-Macchia resolution of $S/I$ is isomorphic to the generalized Lyubeznik resolution and, in particular, is minimal.
\end{lemma}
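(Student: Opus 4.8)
The plan is to show the underlying acyclic matchings coincide. Both the generalized Lyubeznik resolution $\mathcal F^{L}$ and the generalized Barile--Macchia resolution $\mathcal F^{\mathrm{BM}}$ of $S/I$ are Morse resolutions (Theorem~\ref{thm:morse-resolutions}), associated to $f$-homogeneous acyclic matchings $A^{L}$ and $A^{\mathrm{BM}}$ on $G_{I}$; by the discussion preceding the lemma each decomposes as $\bigsqcup_{p\in P}A_{p}$, where $A_{p}$ is produced by running (the relevant version of) Algorithm~\ref{algorithm1} on the fiber $f^{-1}(p)$ with the order $\succ_{p}$, using the selection function $m_{L}$ on the Lyubeznik side and $\sbridge$ on the Barile--Macchia side. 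Since $\mathcal F_{A}$ depends only on the matching $A$, it suffices to prove $A^{L}=A^{\mathrm{BM}}$: then $\mathcal F^{\mathrm{BM}}=\mathcal F^{L}$, which is minimal by hypothesis. (Even the inclusion $A^{L}\subseteq A^{\mathrm{BM}}$ would suffice: it forces every $A^{\mathrm{BM}}$-critical subset to be $A^{L}$-critical, so $\rank(\mathcal F^{\mathrm{BM}})_{i}\le\rank(\mathcal F^{L})_{i}=\beta_{i}(S/I)$ for all $i$ by minimality of $\mathcal F^{L}$, forcing equality of all ranks and hence minimality of $\mathcal F^{\mathrm{BM}}$, whereupon $\mathcal F^{\mathrm{BM}}\cong\mathcal F^{L}$ by uniqueness of minimal free resolutions.) Fix one fiber and write $\succ\coloneqq\succ_{p}$ for the rest of the argument.

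Let $C\coloneqq\{(\tau,\tau\setminus\{\sbridge(\tau)\})\mid \tau\subseteq\mingens(I),\ \sbridge(\tau)\neq\emptyset\}$ be the common pool of candidate edges of the two algorithms (note $|\tau|\ge 3$ for every such $\tau$, since a two-element set has no bridge). I will prove $A^{L}=C$, that $C$ is an acyclic matching, and $A^{\mathrm{BM}}=C$. The third statement follows from the second: if $C$ is a matching then no subset is at once the top of one candidate edge and the bottom of another, so in the run of Algorithm~\ref{algorithm1} no candidate-edge top is removed from $\Omega$ before it is processed and the conflict step~(3) never fires, so the algorithm returns exactly $C$. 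The second statement follows from the first, because $A^{L}$ is an $f$-homogeneous acyclic matching by Theorem~\ref{thm:generalized-Lyubeznik}. So everything reduces to $A^{L}=C$, whose heart is the \emph{Key Claim}: if $\tau\subseteq\mingens(I)$ has a bridge, then $v_{L}(\tau)\neq-\infty$ and $m_{L}(\tau)\in\tau$ (so $m_{L}(\tau)=\sbridge(\tau)$ by hypothesis).

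Granting the Key Claim, $A^{L}=C$ is a bookkeeping exercise with least common multiples. First, whenever $m_{L}(\sigma)$ exists and lies in $\sigma$ it is automatically a bridge of $\sigma$ (it divides $\lcm$ of the top $v_{L}(\sigma)$ elements of $\sigma$, all of which are strictly $\succ$-above $m_{L}(\sigma)$ and hence remain in $\sigma\setminus\{m_{L}(\sigma)\}$); consequently each Lyubeznik edge $(\sigma\cup\{m_{L}(\sigma)\},\sigma\setminus\{m_{L}(\sigma)\})$ has its top equal to a subset possessing a bridge, and since $A^{L}$ is a matching the Key Claim applied to that top identifies this edge with the corresponding element of $C$; thus $A^{L}\subseteq C$. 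Conversely, for $\tau$ with a bridge the Key Claim and the hypothesis give $m_{L}(\tau)=\sbridge(\tau)\in\tau$, so the Lyubeznik rule at $\rho=\tau$ produces exactly $(\tau,\tau\setminus\{\sbridge(\tau)\})\in A^{L}$; thus $C\subseteq A^{L}$. Hence $A^{L}=C$, and the reduction above finishes the proof.

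The main obstacle is the Key Claim, the single point at which minimality of $\mathcal F^{L}$ is used; proving it means converting the homological hypothesis into a combinatorial constraint on $A^{L}$. I would argue by contradiction: if $\tau$ has a bridge but $v_{L}(\tau)=-\infty$ or $m_{L}(\tau)\notin\tau$, then the hypothesis — which pins every downward Lyubeznik edge to a bridge removal — forces $A^{L}$ to treat $\tau$ either as the bottom of an edge or as a critical subset. In both cases one propagates the lcm-equalities implied by $\lcm(\tau\setminus\{\sbridge(\tau)\})=\lcm(\tau)$, and (when $m_{L}(\tau)\notin\tau$) by $\lcm(\tau\cup\{m_{L}(\tau)\})=\lcm(\tau)$, which holds because $m_{L}(\tau)\mid\lcm(\tau)$, to locate two $A^{L}$-critical subsets $\tau'\supsetneq\sigma'$ of consecutive cardinalities with $\lcm(\tau')=\lcm(\sigma')$ and nonzero connecting coefficient in the Morse differential, contradicting minimality of $\mathcal F^{L}$ (equivalently, the combinatorial characterization of minimality for generalized Lyubeznik resolutions from~\cite{CHM24-first, CK24}). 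The case split and the bookkeeping needed to exhibit this forbidden critical pair is the technical crux; identifying $C$, checking that Algorithm~\ref{algorithm1} reproduces a known matching verbatim, and the divisibility computations are all routine.
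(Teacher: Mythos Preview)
Your reduction to the Key Claim is clean, and the deduction that $A^{L}=C$ implies $A^{\mathrm{BM}}=C$ is correct. The problem is that the Key Claim itself is never proved; you explicitly flag it as ``the main obstacle'' and then offer only a plan. That plan does not work as stated. To extract a contradiction with minimality of $\mathcal F^{L}$ from a bridged $\tau$ with $v_{L}(\tau)=-\infty$ (or $m_{L}(\tau)\notin\tau$), you propose to produce two $A^{L}$-critical subsets of consecutive cardinality with equal $\lcm$ and \emph{nonzero} Morse differential coefficient. But the Morse differential is a signed sum over gradient paths in $G_{I}^{A}$, and there is no reason a priori that these do not cancel; the references you cite do not supply a combinatorial non-vanishing criterion in this generality. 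You also do not explain how, in the case $m_{L}(\tau)\notin\tau$, the passage to $\tau\cup\{m_{L}(\tau)\}$ eventually terminates at a critical pair rather than looping indefinitely through the matching. So as written the proof is incomplete at exactly the point where the homological hypothesis must be converted into a combinatorial one.

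The paper sidesteps this entirely. It never asserts $A^{L}=C$ (equivalently, it never claims the Key Claim). Instead it uses only the easy containment $A^{L}\subseteq C$, then argues---by running the two algorithms in parallel with the same cardinality priority---that $A^{\mathrm{BM}}$ arises from $A^{L}$ by replacing and possibly \emph{adding} edges. Minimality of $\mathcal F^{L}$ then forbids any genuine addition (it would force $\rank(\mathcal F^{\mathrm{BM}})_{i}<\beta_{i}(S/I)$ for some $i$, which is impossible for a free resolution), so only replacements occur; hence $\rank(\mathcal F^{\mathrm{BM}})_{i}=\rank(\mathcal F^{L})_{i}$ for all $i$, and $\mathcal F^{\mathrm{BM}}$ is minimal and therefore isomorphic to $\mathcal F^{L}$. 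The point is that the paper compares \emph{sizes} of the matchings rather than the matchings themselves, so it never needs to know that every bridged subset is an $A^{L}$-source.
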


\begin{proof}
    Let $A_L = \bigcup_{p\in P} (A_L)_p$ denote the homogeneous acyclic matching that induces the generalized Lyubeznik resolution $\mathcal{F}_L$ in this case. Then for each $p\in P$, we have
    \begin{align*}
        (A_L)_p&=\{(\sigma\cup \{m_L(\sigma)\})\to (  \sigma \setminus \{m_L(\sigma)\}) \mid  f(\sigma)=p \text{ and }v_L(\sigma)\neq - \infty \}\\
        &=\{\sigma\to (\sigma \setminus \{m_L(\sigma)\}) \mid  f(\sigma)=p, \ m_L(\sigma) \text{ exists and is in } \sigma  \}\\
        &=\{\sigma\to (  \sigma \setminus \{\sbridge(\sigma)\}) \mid  f(\sigma)=p, \ m_L(\sigma) \text{ exists and is in } \sigma  \}.
    \end{align*}
    Recalling the discussion before this result, we can assume $\sigma$ here is chosen based on cardinality, and thus coincides with how the Barile-Macchia algorithm works. Let $A_{BM}$ denote the homogeneous acyclic matching that induces the generalized Barile-Macchia resolution $\mathcal{F}_{BM}$ in this case. By Step~(3) of the algorithm, $A_{BM}$ is exactly $A_L$ after replacing and adding some directed edges. Because $\mathcal{F}_L$ already induces the minimal resolution by the hypotheses, adding edges is impossible by Theorem~\ref{thm:morse-resolutions}. Thus $A_{BM}$ is exactly $A_L$ after (potentially) replacing some edges. Again by Theorem~\ref{thm:morse-resolutions}, $\rank (\mathcal{F}_{BM})_i=\rank (\mathcal{F}_{L})_i$ for any index $i$. Thus $\mathcal{F}_{BM}$ is also minimal, and isomorphic to $\mathcal{F}_{L}$ as a consequence.
\end{proof}

The hypotheses in Lemma \ref{lem:Lyu=BM}, while seem restrictive, hold for both of the only classes of ideals which are known to have minimal generalized Lyubeznik resolutions (see Theorems~\ref{thm:generic-BM} and~\ref{thm:shellable-BM}).


\section{Generic Monomial Ideals and Ideals with Linear Quotients} \label{sec.gen_shellable}

In this section, we prove results parallel to those of Batzies and Welker~\cite{BW02} for generic monomial ideals and for monomial ideals with linear quotients. As in Section~\ref{sec.Morse}, $S = \Bbbk[x_1, \dots, x_d]$ denotes a polynomial ring over a field $\Bbbk$ and $I \subseteq S$ is a monomial ideal.

We start by recalling the definition of generic monomial ideals, following~\cite{MSY00}. For a monomial $m$, let $\text{ord}_i(m)$ be the highest power of $x_i$ that divides $m$, for $i=1, \dots, d$. For a monomial ideal $I$, set $\text{ord}_i(I)\coloneqq \min \{\text{ord}_i(m) \mid m\in \mingens(I)\}$, for $i=1, \dots, d$. A monomial ideal $I$ is called \emph{generic} if whenever there exist two different monomials $m,m'\in \mingens(I)$ with $\text{ord}_i(m) = \text{ord}_i(m') > \text{ord}_i(I)$, for some $1 \le i \le d$, then there exists a third monomial $m''\in \mingens(I)$ which divides $\lcm(m,m')$ and for any $1 \le j \le d$, 
$$\max\{\text{ord}_j(m), \text{ord}_j(m')\} > \text{ord}_j(m'') \text{ if and only if } \max\{\text{ord}_j(m), \text{ord}_j(m')\}> \text{ord}_j(I).$$ 

\begin{theorem}\label{thm:generic-BM}
    Let $I$ be a generic monomial ideal. Then $S/I$ has a minimal generalized Barile-Macchia resolution.
\end{theorem}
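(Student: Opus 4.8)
The plan is to reduce Theorem~\ref{thm:generic-BM} to the known fact that generic monomial ideals have minimal generalized Lyubeznik resolutions, via Lemma~\ref{lem:Lyu=BM}. Concretely, Batzies and Welker~\cite{BW02} proved that if $I$ is generic, then there is a choice of poset $P$, lcm-compatible $P$-grading $f$, and orders $(\succ_p)_{p \in P}$ for which the generalized Lyubeznik resolution is minimal --- in fact one can take $P = \ZZ^d$ with $f = \lcm$ and realize the Scarf complex this way, using a fixed generic total order on $\mingens(I)$. So the only real work is to verify the two remaining hypotheses of Lemma~\ref{lem:Lyu=BM}: first, the $f$-homogeneity condition $f(\sigma \setminus \{m_L(\sigma)\}) = f(\sigma \cup \{m_L(\sigma)\})$ whenever $m_L(\sigma)$ exists; and second, the key identity $m_L(\sigma) = \sbridge(\sigma)$ for every $\sigma \subseteq \mingens(I)$ for which $m_L(\sigma)$ exists and lies in $\sigma$.

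First I would handle the $f$-homogeneity condition. When $P = \ZZ^d$ and $f = \lcm$, this amounts to checking that if $m_L(\sigma)$ exists and divides $\lcm(m_1, \dots, m_{v_L(\sigma)})$, then adjoining or removing $m_L(\sigma)$ does not change the lcm of $\sigma$; this is essentially built into the definitions of $v_L$ and $m_L$ (the monomial $m_L(\sigma)$ divides an lcm of a sub-collection of $\sigma$, hence dividing $\lcm(\sigma)$, so $\lcm(\sigma \cup \{m_L(\sigma)\}) = \lcm(\sigma)$; the statement for $\sigma \setminus \{m_L(\sigma)\}$ requires a short argument that $m_L(\sigma)$ is "redundant" in the lcm, which follows because $v_L(\sigma) \geq$ the position of $m_L(\sigma)$ forces $m_L(\sigma)$ to divide the lcm of the strictly larger generators). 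I would spell this out carefully since it is a definitional unwinding rather than a deep point.

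The heart of the proof is the identity $m_L(\sigma) = \sbridge(\sigma)$. I would argue as follows. Suppose $m_L(\sigma)$ exists and $m_L(\sigma) \in \sigma$; by the $f$-homogeneity just established, $m_L(\sigma)$ is a bridge of $\sigma$, so $\sbridge(\sigma) \neq \emptyset$ and $\sbridge(\sigma) \preceq m_L(\sigma)$. For the reverse inequality I must show no bridge of $\sigma$ is strictly smaller than $m_L(\sigma)$; this is where genericity enters. The point is that for a generic ideal, the structure of lcm-lattices is rigid enough (no "extra" coincidences of orders beyond those forced by $\text{ord}_i(I)$) that the smallest bridge of any face is forced to coincide with the minimal divisor witnessing $v_L$. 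I expect to invoke, or re-derive, the characterization from~\cite{BW02} (or from properties of the Scarf complex: for a generic ideal, a subset $\sigma$ is a face of the Scarf complex iff its lcm is not equal to the lcm of any other subset, and the critical/matched subsets are governed entirely by lcm-coincidences) together with the combinatorial fact that a bridge $m$ of $\sigma$ is precisely a generator in $\sigma$ whose removal does not change $\lcm(\sigma)$, i.e., $m \mid \lcm(\sigma \setminus \{m\})$; I then show that for such an $m$, one has $m \preceq_{f(\sigma)}$-minimal divisor relationships forcing $m = m_L(\sigma)$ when $m$ is smallest.

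The main obstacle I anticipate is exactly this last identification: carefully matching the combinatorial description of $\sbridge$ (smallest generator whose deletion preserves the lcm) with the more intricate $v_L$/$m_L$ recursion, and showing that genericity prevents a smaller bridge from sneaking in below $m_L(\sigma)$. If a direct comparison proves awkward, a fallback is to argue at the level of the induced matchings: show that both $A_L$ and the modified Lyubeznik-style matching (with $\sbridge$ replacing $m_L$) produce the same set of matched edges on each fiber $f^{-1}(p)$ by induction on cardinality, using that genericity makes the "conflict resolution" Step~(3) of Algorithm~\ref{algorithm1} vacuous --- no two matched edges share a target --- so that the Barile-Macchia matching is forced to agree with the Lyubeznik one. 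Either way, once $m_L = \sbridge$ is established, Lemma~\ref{lem:Lyu=BM} immediately yields that the generalized Barile-Macchia resolution is isomorphic to the minimal generalized Lyubeznik resolution, completing the proof.
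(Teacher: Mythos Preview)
Your high-level strategy matches the paper's: invoke Batzies--Welker's minimality result for generic ideals and feed it through Lemma~\ref{lem:Lyu=BM}, so that the only real task is the identity $m_L(\sigma)=\sbridge(\sigma)$ whenever $m_L(\sigma)$ exists and lies in $\sigma$. The $f$-homogeneity check and the easy direction $\sbridge(\sigma)\preceq_{f(\sigma)} m_L(\sigma)$ are fine.

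The gap is in the reverse inequality, and it is tied to a misstatement in your setup. You say one may use ``a fixed generic total order on $\mingens(I)$,'' but \cite[Proposition~4.1]{BW02} does not do this, and the argument does not go through for a fixed order. The orders $(\succ_p)$ genuinely vary: for each $p$ one lets $\sigma_1,\dots,\sigma_k$ be the \emph{minimal} subsets of $\mingens(I)$ with $\lcm(\sigma_i)=p$, sets $\Sigma_p=\sigma_1\cup\cdots\cup\sigma_k$, and chooses $\succ_p$ so that the elements of $\Sigma_p$ are the \emph{largest}. The paper's reverse inequality hinges entirely on this choice. First one shows $m_L(\sigma)\notin\Sigma_p$: if $\sigma$ contains two of the $\sigma_i$, the ``third monomial'' $m''$ supplied by genericity strictly lowers every relevant exponent, hence lies outside $\Sigma_p$ while dividing $\lcm(m,m')$ for $m,m'\in\sigma$, so $m_L(\sigma)\preceq_p m''$ and $m_L(\sigma)\notin\Sigma_p$; if $\sigma$ contains a unique $\sigma_i$, minimality of $\sigma_i$ does the job. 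Then $\sbridge(\sigma)\preceq_p m_L(\sigma)$ gives $\sbridge(\sigma)\notin\Sigma_p$ as well. But $\sbridge(\sigma)\mid\lcm(\sigma)=\lcm(\sigma_1)$, and every element of $\sigma_1\subseteq\Sigma_p\cap\sigma$ is strictly $\succ_p$-larger than $\sbridge(\sigma)$ by construction; this is exactly the witness in the definition of $v_L$ that forces $m_L(\sigma)\preceq_p\sbridge(\sigma)$. Your sketch (``lcm-lattices are rigid enough,'' Scarf-complex heuristics) does not supply this mechanism, and your fallback---showing Step~(3) of Algorithm~\ref{algorithm1} is vacuous---still requires the same identification on each fiber. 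Without the $\Sigma_p$-adapted orders there is no reason the smallest bridge should divide the lcm of the elements of $\sigma$ strictly above it, which is precisely what you need.
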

\begin{proof}
    We recall the generalized Lyubeznik resolution that minimally resolved $S/I$~\cite[Proposition 4.1]{BW02}.  Set $P=\mathbb{N}^d$ with the natural partial order and $f=\lcm$. Let $p\in P$ be a monomial such that there exists $\sigma\subseteq \mingens(I)$ with $\lcm(\sigma)=p$. Let $\sigma_1,\dots, \sigma_k$ be the minimal subsets of $\mingens(I)$ such that $\lcm(\sigma_1)=\cdots = \lcm(\sigma_k)=p$. Then we define a total order $(\succ_p)$ on $\mingens(I)$ so that elements of $\Sigma_p=\sigma_1\cup \cdots \cup \sigma_k$ are the biggest. By~\cite[Proposition 4.1]{BW02}, the generalized Lyubeznik resolution of $S/I$ using these ingredients is minimal. By Lemma~\ref{lem:Lyu=BM}, it suffices to show that for any $\sigma\subseteq \mingens(I)$ such that $m_L(\sigma)$ exists and is in $\sigma$, we have
    \begin{equation*}\label{equa:sb=mL-generic}
        \sbridge(\sigma)=m_L(\sigma).
    \end{equation*}
    Fix $p=\lcm(\sigma)$. We claim that $m_L(\sigma)\notin \Sigma_p$. If $\sigma$ contains exactly one of the $\sigma_i$, then this follows immediately from the minimality hypothesis of $\sigma_i$. Now we can assume, without loss of generality that $\sigma$ contains $\sigma_1$ and $\sigma_2$. Then there must be a variable $x_r$ and two monomials $m\in \sigma_1$ and $m'\in \sigma_2$ such that $\text{ord}_r(m) = \text{ord}_r(m') > \text{ord}_r(I)$. By genericity, there exists a monomial $m''\in \mingens(I)$ that divides $\lcm(m,m')$ such that for any index $j$, if $\text{ord}_j(m'') > \text{ord}_j(I)$, then $\max\{\text{ord}_j(m), \text{ord}_j(m')\} > \text{ord}_j(m'')$. In other words, $m''\notin \Sigma_p$. By definition, we have $m''\succ_p m_L(\sigma)$, and thus $m_L(\sigma)\notin \Sigma_p$, as claimed. 
    
    Back to proving (\ref{equa:sb=mL-generic}), we first have $m_L(\sigma) \succeq_p \sbridge(\sigma)$ by definition. In particular, this implies that $\sbridge(\sigma)\notin \Sigma_p$. Couple this with the facts that $\sbridge(\sigma)\in \sigma$ and $\lcm(\sigma)=\lcm(\sigma_1)$, we must have
    \[
    \sbridge(\sigma) \mid \lcm(\sigma_1) \mid \lcm\left(\{n\in \mingens(I)  \mid n\succ \sbridge(\sigma)\}\right).
    \]
    By definition 
    $m_L(\sigma) \preceq_p \sbridge(\sigma)$. This concludes the proof.
\end{proof}

We turn our attention to monomial ideals with linear quotients. A monomial ideal $I$ is said to have \emph{linear quotients} if there exists an total order $(\sqsupset)$ on $\mingens(I)$ such that for any $m,m'\in \mingens(I)$ with $m\sqsupset m'$, there exists an $m''\in \mingens(I)$ such that $m\sqsupset m''$ and $\lcm(m,m'')=mx_{g(m,m'')}$ divides $\lcm(m,m')$ for some index $g(m,m'')$. 

\begin{theorem}\label{thm:shellable-BM}
    Let $I$ be a monomial ideal with linear quotients. Then $S/I$ has a minimal generalized Barile-Macchia resolution.
\end{theorem}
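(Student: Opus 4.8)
The plan is to mirror the proof of Theorem~\ref{thm:generic-BM}. By~\cite{BW02}, $S/I$ admits a minimal generalized Lyubeznik resolution; fix the associated ingredients: a poset $P$, the grading $f=\lcm$, and a family of total orders $(\succ_p)_{p\in P}$ derived from a linear-quotients order of $I$. By Lemma~\ref{lem:Lyu=BM} it then suffices to verify its two standing hypotheses (minimality of the Lyubeznik resolution being given). The first, $f(\sigma\setminus\{m_L(\sigma)\})=f(\sigma\cup\{m_L(\sigma)\})$ for every $\sigma$ with $m_L(\sigma)$ defined, is automatic: writing $\sigma=\{m_1\succ\cdots\succ m_q\}$ in $\succ_{f(\sigma)}$-decreasing order, $m_L(\sigma)$ divides $\lcm(m_1,\dots,m_{v_L(\sigma)})$ and hence $\lcm(\sigma)$, so with $f=\lcm$ both sides equal $\lcm(\sigma)$. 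The second, and only substantive, hypothesis is that $\sbridge(\sigma)=m_L(\sigma)$ whenever $m_L(\sigma)$ exists and lies in $\sigma$; this is where the linear quotients property enters.

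So fix $\sigma=\{m_1\succ\cdots\succ m_q\}$ with $m_L(\sigma)\in\sigma$ and put $v=v_L(\sigma)$. Since the supremum defining $v$ is attained, some generator $\prec m_v$ divides $\lcm(m_1,\dots,m_v)$, so $m_L(\sigma)\prec m_v$ and thus $m_L(\sigma)=m_j$ for some $j>v$; if $j>v+1$ then $m_j$ divides $\lcm(m_1,\dots,m_v)$, hence $\lcm(m_1,\dots,m_{v+1})$, while $m_j\prec m_{v+1}$, so $k=v+1$ would attain the supremum defining $v_L(\sigma)$, contradicting $v_L(\sigma)=v$. Hence $m_L(\sigma)=m_{v+1}$. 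Moreover $m_{v+1}$ is a bridge of $\sigma$: it divides $\lcm(m_1,\dots,m_v)$ and $\{m_1,\dots,m_v\}\subseteq\sigma\setminus\{m_{v+1}\}$, so $\lcm(\sigma\setminus\{m_{v+1}\})=\lcm(\sigma)$. Therefore $\sbridge(\sigma)\preceq m_L(\sigma)$, and everything reduces to the reverse inequality, equivalently to showing that $\sigma$ has no bridge $m_\ell$ with $\ell>v+1$ (equivalently, that $\sbridge(\sigma)$ divides $\lcm(m_1,\dots,m_v)$).

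This last point is the \emph{main obstacle} and the only place the linear quotients hypothesis is used. Suppose $m_\ell$ is a bridge of $\sigma$ with $\ell>v+1$. Maximality of $v$, applied at level $\ell-1>v$, shows that no generator $\prec m_{\ell-1}$ — in particular not $m_\ell$ — divides $\lcm(m_1,\dots,m_{\ell-1})$; so although $m_\ell\mid\lcm(\sigma\setminus\{m_\ell\})$, part of this covering must be supplied by some $m_t\in\sigma$ with $t>\ell$, and there is a variable $x_g$ with $\text{ord}_g(m_t)\ge\text{ord}_g(m_\ell)>\max_{i<\ell}\text{ord}_g(m_i)$. The plan is to feed pairs of the form $m_\ell\succ m_t$ into the linear quotients property and use it to replace the ``low'' covering generators by $\succ$-larger ones without destroying the divisibility $m_\ell\mid\lcm(\cdot)$, eventually producing a generator $\prec m_{\ell-1}$ that divides $\lcm(m_1,\dots,m_{\ell-1})$ — contradicting the maximality of $v$, and thereby forcing $\sbridge(\sigma)=m_{v+1}=m_L(\sigma)$. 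Organising this replacement process so that it terminates and stays inside the correct truncated least common multiple is the delicate point; I expect it to proceed by a minimal-counterexample analysis (minimising, e.g., $|\sigma|$, or the index $\ell$, or $\text{ord}_g(m_\ell)$ over admissible variables) together with the bridge manipulations of~\cite{CHM24-first,CK24}. Once $\sbridge(\sigma)=m_L(\sigma)$ is established, Lemma~\ref{lem:Lyu=BM} yields the theorem.
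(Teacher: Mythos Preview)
Your proposal has two genuine gaps. First, a factual error in the setup: the Batzies--Welker construction for linear quotients does \emph{not} take $f=\lcm$. Their poset is $P=\{(\alpha,m):\alpha\in\lcm(I),\ m\in M_\alpha\}$ and $f(\sigma)=(\lcm\sigma,\max_{\sqsupset}\sigma)$, where $(\sqsupset)$ is the linear-quotients order; the total orders $(\succ_p)$ are then built so that a carefully chosen finite set $N_m$ (one generator $n_j^m$ per ``linear-quotients variable'' $x_j$) sits at the bottom. Your verification of the first hypothesis of Lemma~\ref{lem:Lyu=BM} (``automatic since $f=\lcm$'') is therefore based on an incorrect premise; with the actual $f$ one must also check that removing or adding $m_L(\sigma)$ does not change $\max_{\sqsupset}\sigma$, which is exactly why the set $N_m$ is engineered as it is.

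Second, and more seriously, the heart of your argument---ruling out a bridge $m_\ell$ with $\ell>v+1$---is left as a plan, and the plan as stated cannot succeed. You propose to ``feed pairs $m_\ell\succ m_t$ into the linear quotients property,'' but linear quotients is a statement about the order $(\sqsupset)$, not about $(\succ_p)$, and these orders differ precisely on the relevant small elements. The paper's proof bypasses this entirely: it never tries to push bridges upward by an inductive replacement. Instead it first shows $m_L(\sigma)\in N_m$ (one application of linear quotients to the pair $m=\max_{\sqsupset}\sigma$ and any $m'\in\sigma\setminus(N_m\cup\{m\})$), which forces $\sbridge(\sigma)\in N_m$ as well; then, writing $\sbridge(\sigma)=n_k^m$ and using the defining relation $\lcm(n_k^m,m)=x_k m$, it exhibits a concrete $m'\in\sigma\setminus N_m$ with $n_k^m\mid\lcm(m,m')$, giving $m_L(\sigma)\preceq_p\sbridge(\sigma)$ directly. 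The specific structure of $N_m$ and of $(\succ_p)$ is what makes this a two-line computation rather than an open-ended induction; without invoking it, the reverse inequality does not follow.
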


\begin{proof}
    We recall the generalized Lyubeznik resolution that minimally resolved $S/I$~\cite[Proposition 4.3]{BW02}. Let $(\sqsupset)$ be a total order on the minimal generators of $I$ as in the definition of linear quotients. Set
\begin{align*}
    \lcm(I)&\coloneqq \{\lcm(\sigma) \mid \sigma \subseteq \mingens(I) \},\\
    M_\alpha&\coloneqq \{m\in \mingens(I) \mid \exists \sigma \subseteq \mingens(I) \text{ such that } \lcm\sigma = \alpha \text{ and } m=\max_{\sqsupset} \sigma \},
\end{align*}
for any monomial $\alpha$. Let
\[
P\coloneqq \{(\alpha,m) ~\big|~ \alpha\in \lcm(I), m\in M_\alpha\}
\]
be a poset with partial order given by 
\[(\alpha,m)\geq (\alpha',m') \Longleftrightarrow (\alpha>\alpha') 
\text{ or } 
(\alpha=\alpha' \text{ and } m\sqsupset m').\]  
For $\sigma\subseteq \mingens(I)$, define \[f(\sigma)\coloneqq (\lcm\sigma, \max_\sqsupset (\sigma)).\]
For each $m\in\mingens(I)$, set
\[
J_m\coloneqq \{j\in [d] \mid \exists n_j^m\in \mingens(I) \text{ such that } n_j^m\sqsubset m \text{ and } \lcm(n_j^m,m)=x_jm \}.
\]
For each $j\in J_m$, fix a monomial $n_j^m$. Now for each $p=(\alpha,m) \in P$, we define a total order $(\succ_p)$ on $\mingens(I)$ by setting
\begin{itemize}
    \item $N_m\coloneqq \{n_j^m ~\big|~ j\in J_m\}$,
    \item $\mingens(I)\setminus N_m \succ_p N_m$,
    \item for $n_j^m, n_{j'}^m\in N_m$, we have $n_j^m\succ_p n_{j'}^m \Longleftrightarrow j>j'$,
    \item $\succ_p \mid_{\mingens(I)\setminus N_m}=\sqsupset_p \mid_{\mingens(I)\setminus N_m}$.
\end{itemize}
By~\cite[Proposition 4.3]{BW02}, the generalized Lyubeznik resolution of $S/I$ using these ingredients is minimal. In fact, they explicitly described the $f$-homogeneous acyclic matching  $A = \bigcup A_p$ in this case: 
    \[
    A_p\coloneqq \{(\sigma \cup \{m_L(\sigma)\})\to (\sigma \setminus \{m_L(\sigma)\})  \mid f(\sigma)=p \text{ and } (\mingens(I) \setminus N_m ) \cap \sigma \supsetneq \{m\} \}.
    \]
    Fix $p=(\alpha,m)$. By Lemma~\ref{lem:Lyu=BM}, it suffices to show that for any $\sigma\subseteq \mingens(I)$ where $f(\sigma)=p$ and $\big({\mingens}(I) \setminus (N_m \cup \{m\}) \big) \cap \sigma \neq \emptyset$, we have
    \begin{equation}\label{equa:sb=mL}
        \sbridge(\sigma \cup \{m_L(\sigma)\}) = m_L(\sigma).
    \end{equation}
    We claim that $m_L(\sigma) \in N_m$. Indeed, let $m'\in \big({\mingens}(I) \setminus (N_m \cup \{m\}) \big) \cap \sigma$. We have $m=\max_\sqsupset (\sigma) \sqsupset m'$. Hence by definition, there exists $m''\in \mingens(I)$ such that $m\sqsupset m''$ and $\lcm(m,m'')=mx_j$ divides $\lcm(m,m')$ for some index $j$. By our construction, $j\in J_m$ and we can assume that $m''=n_j^m$. We remark that by our total order, $m'\sqsupset m''$, and $m''$ divides $\lcm(m,m')$. By definition, $m''\succ_p m_L(\sigma)$. Since $m''=n_j^m\in N_m$, so is $m_L(\sigma)$. Thus the claim holds. Due to this, we have $\big({\mingens}(I) \setminus (N_m \cup \{m\}) \big) \cap (\sigma \cup \{m_L(\sigma)\}) \neq \emptyset$. Therefore we can assume that $m_L(\sigma)\in \sigma$ since we already know that $f(\sigma \cup \{m_L(\sigma)\}) = f(\sigma)=p$. By definition, we have 
    \begin{equation}\label{equa:ml-sb-1}
        m_L(\sigma) \succeq_p \sbridge(\sigma).
    \end{equation}
    In particular, this implies that $\sbridge(\sigma) \in N_m$, i.e., there exists an index $k\in J_m$ such that $\sbridge(\sigma)=n_k^m$. Since $\lcm(n_k^m,m)=x_km$ and  $m\in \sigma$, we have $n_k^m$ is a bridge of $\sigma$ if and only if $x_k^{a+1}$ divides $\lcm(\sigma \setminus \{n_k^m\})$, where  $x_k^a$ is the highest power of $x_k$ that divides $m$. In particular, this implies that there exists a monomial $m'\in \sigma$ such that $n_k^m$ divides $\lcm(m,m')$. Then $x_km=\lcm(n_k^m,m)$ divides $\lcm(m,m')$, which implies that $m'\notin N_m$. Thus $m'\sqsupset n_k^m$ and $n_k^m$ divides $\lcm(m,m')$. By definition, we have
    \begin{equation}\label{equa:ml-sb-2}
        m_L(\sigma) \preceq_p n_k^m= \sbridge(\sigma).
    \end{equation}
    Combining (\ref{equa:ml-sb-1}) and (\ref{equa:ml-sb-2}), we obtain (\ref{equa:sb=mL}), as desired.    
\end{proof}

Focusing on edge ideals, we recall the following equivalent conditions:
\begin{enumerate}[label=(\roman*)]
    \item $G$ is a co-chordal graph, i.e., the complement graph of $G$ is chordal.
    \item $I(G)$ has a linear resolution.
    \item $I(G)$ has linear quotients.
\end{enumerate}
Here (i)$\Longleftrightarrow$(ii) is the celebrated Fr\"oberg theorem~\cite{Fro90}, (ii)$\Longleftrightarrow$(iii) is proved by Herzog-Hibi-Zheng~\cite[Theorem 3.2]{HHZ03}. 
We thus obtain the following corollary.

\begin{corollary}\label{cor:co-chordal-BM}
    Let $G$ be a co-chordal graph. Then $I(G)$ has a minimal generalized Barile-Macchia resolution.
\end{corollary}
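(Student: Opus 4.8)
The statement to prove is Corollary~\ref{cor:co-chordal-BM}: if $G$ is a co-chordal graph, then $I(G)$ has a minimal generalized Barile-Macchia resolution. The plan is to simply chain together the results already established in the excerpt. First I would invoke Fröberg's theorem~\cite{Fro90} together with the Herzog-Hibi-Zheng theorem~\cite[Theorem 3.2]{HHZ03}, which are recalled just before the corollary: these give that $G$ being co-chordal is equivalent to $I(G)$ having linear quotients. So the hypothesis that $G$ is co-chordal immediately yields that $I(G)$ is a monomial ideal with linear quotients.

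Second, I would apply Theorem~\ref{thm:shellable-BM}, which asserts precisely that a monomial ideal with linear quotients has a minimal generalized Barile-Macchia resolution. Applying this to $I = I(G)$ finishes the argument. That is the entire proof — it is a one-line deduction once the equivalences (i) $\Longleftrightarrow$ (iii) are in hand, and the substantive work has already been done in Theorem~\ref{thm:shellable-BM} (via the comparison with the generalized Lyubeznik resolution in Lemma~\ref{lem:Lyu=BM} and Batzies-Welker's~\cite[Proposition 4.3]{BW02}).

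There is essentially no obstacle here: the corollary is a packaging of known results, and the only thing to be careful about is citing the correct equivalence — namely that it is the linear-quotients property (iii), not merely having a linear resolution (ii), that one feeds into Theorem~\ref{thm:shellable-BM}, since the latter theorem's hypothesis is stated in terms of linear quotients. The chain (i) $\Longleftrightarrow$ (ii) $\Longleftrightarrow$ (iii) makes all three conditions interchangeable, so starting from co-chordality is fine. Thus the proof reads: by Fröberg's theorem and~\cite[Theorem 3.2]{HHZ03}, $I(G)$ has linear quotients; by Theorem~\ref{thm:shellable-BM}, $S/I(G)$ — equivalently $I(G)$ — has a minimal generalized Barile-Macchia resolution.
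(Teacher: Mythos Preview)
Your proposal is correct and follows exactly the same approach as the paper: the corollary is stated immediately after the equivalences (i)~$\Longleftrightarrow$~(ii)~$\Longleftrightarrow$~(iii) with the phrase ``We thus obtain the following corollary,'' and no separate proof is given. Your chain --- co-chordal $\Rightarrow$ linear quotients via Fr\"oberg and Herzog--Hibi--Zheng, then apply Theorem~\ref{thm:shellable-BM} --- is precisely the intended deduction.
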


Inspired by the results of~\cite{BW02} and Theorems \ref{thm:generic-BM} and \ref{thm:shellable-BM}, we raise the following conjecture.

\begin{conjecture}
    If a monomial ideal $I$ has a minimal generalized Lyubeznik resolution, then $I$ also has a minimal generalized Barile-Macchia resolution.
\end{conjecture}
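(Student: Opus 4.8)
The plan is to mirror the strategy used in the proofs of Theorems~\ref{thm:generic-BM} and~\ref{thm:shellable-BM}, but to make it unconditional. Suppose $I$ has a minimal generalized Lyubeznik resolution, induced by data $P$, $f$, and $(\succ_p)_{p \in P}$ with the associated acyclic matching $A_L = \bigcup_p (A_L)_p$ and critical sets $\mathcal{C}_L$. The key point already established in Lemma~\ref{lem:Lyu=BM} is that if we \emph{knew} $m_L(\sigma) = \sbridge(\sigma)$ for every $\sigma$ on which $m_L(\sigma)$ is defined and lies in $\sigma$, then the generalized Barile-Macchia resolution built from the same $P$, $f$, $(\succ_p)$ would be isomorphic to $\mathcal{F}_L$, hence minimal. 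So the entire problem reduces to: given minimality of $\mathcal{F}_L$, either show directly that $m_L = \sbridge$ on the relevant subsets, or --- more realistically --- modify the total orders $(\succ_p)$ to new orders $(\succ_p')$ so that (a) the generalized Lyubeznik resolution is unchanged (still minimal) and (b) the equality $m_L'(\sigma) = \sbridge'(\sigma)$ now holds, and then invoke Lemma~\ref{lem:Lyu=BM}.

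First I would unwind what minimality of $\mathcal{F}_L$ buys us. By Theorem~\ref{thm:morse-resolutions}, the rank of $(\mathcal{F}_L)_i$ equals $\dim_\Bbbk \Tor_i^S(S/I, \Bbbk)$, so the matching $A_L$ is of maximal possible size among $f$-homogeneous acyclic matchings with this $f$; no strictly larger one exists. Next I would analyze the difference between $m_L(\sigma)$ and $\sbridge(\sigma)$ when $m_L(\sigma) \in \sigma$: by construction $m_L(\sigma) \preceq_{f(\sigma)} \sbridge(\sigma)$ always (both divide $\lcm$ of a prefix, and $m_L$ is the $\succ$-minimum of a set containing all bridges that are ``small enough''), so the only failure mode is a strict inequality $m_L(\sigma) \prec_{f(\sigma)} \sbridge(\sigma)$ coming from some non-bridge generator $m$ with $m \mid \lcm(m_1,\dots,m_{v_L(\sigma)})$ and $m \prec \sbridge(\sigma)$. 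I would then argue, using minimality, that such a configuration forces a second matched pair whose existence contradicts maximality of $A_L$ --- roughly, if $m_L(\sigma)$ is a strictly smaller non-bridge, then in the Barile-Macchia run the set $\sigma$ gets matched via its smallest bridge instead, producing a matching at least as large; combined with minimality of $\mathcal{F}_L$ this should pin down equality of ranks and allow the conclusion. The cleanest route may be to reorder: within each fiber $f^{-1}(p)$, slide all generators that arise as $m_L(\sigma)$-values but fail to be bridges into a position making them larger (so they can no longer be selected as $m_L$), analogously to how the $N_m$ blocks were placed in Theorem~\ref{thm:shellable-BM}; one must check this reordering does not create new $m_L$-selections and preserves lcm-compatibility of $f$ and the hypothesis $f(\sigma \setminus \{m_L(\sigma)\}) = f(\sigma \cup \{m_L(\sigma)\})$.

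The main obstacle, I expect, is step two: showing that the reordering (or the direct argument) actually preserves the generalized Lyubeznik resolution's minimality and its underlying matching. Changing $(\succ_p)$ changes $v_L$, $m_L$, and hence $A_L$ itself, so it is not automatic that the new resolution is even the same size; the minimality of the \emph{old} resolution does not obviously transfer. In Theorems~\ref{thm:generic-BM} and~\ref{thm:shellable-BM} this was sidestepped because Batzies--Welker had already exhibited \emph{specific} orders with explicitly described matchings, and one only had to verify $m_L = \sbridge$ for those. Without such an explicit description, one needs a genuinely new idea --- perhaps an invariance principle showing that any two total orders yielding minimal generalized Lyubeznik resolutions (with the same $P, f$) give isomorphic matchings up to the Step~(3) collapsing in Algorithm~\ref{algorithm1}, or a direct combinatorial proof that minimality of $\mathcal{F}_L$ already forces $m_L(\sigma) = \sbridge(\sigma)$ whenever $m_L(\sigma) \in \sigma$. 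I would attempt the latter first: assume $m_L(\sigma) \prec_{f(\sigma)} \sbridge(\sigma)$ for some $\sigma$, and chase the resulting non-bridge generator $m = m_L(\sigma)$ dividing $\lcm(m_1, \dots, m_{v_L(\sigma)})$ through the definition of $A_L$ to locate a critical set that should not be critical, contradicting $\rank(\mathcal{F}_L)_{|\sigma|} = \dim_\Bbbk \Tor_{|\sigma|}$. If that chase closes, the conjecture follows immediately from Lemma~\ref{lem:Lyu=BM}; if it does not, the reordering approach with its attendant bookkeeping is the fallback.
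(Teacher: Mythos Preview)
The statement you are attempting to prove is stated in the paper as a \emph{conjecture}; the authors do not claim a proof. So there is no proof in the paper to compare against, and your proposal must be judged as a plan of attack on an open problem.

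Your strategy --- reduce to the hypothesis of Lemma~\ref{lem:Lyu=BM} --- is the natural one, and indeed is exactly what the paper does in the two special cases (Theorems~\ref{thm:generic-BM} and~\ref{thm:shellable-BM}). But there is a genuine error at the core of your analysis. You assert that ``by construction $m_L(\sigma) \preceq_{f(\sigma)} \sbridge(\sigma)$ always,'' reasoning that $m_L$ is the $\succ$-minimum of a set containing the small bridges. This is backwards. The set $\{m \in \mingens(I) : m \mid \lcm(m_1,\dots,m_{v_L(\sigma)})\}$ over which $m_L(\sigma)$ is the minimum need \emph{not} contain $\sbridge(\sigma)$: a smallest bridge $m_j$ satisfies $m_j \mid \lcm(\sigma\setminus\{m_j\})$, but this divisibility may rely on contributions from $m_{j+1},\dots,m_q$, which are absent from the prefix $\lcm(m_1,\dots,m_{v_L(\sigma)})$. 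What \emph{is} true --- and is used verbatim in both of the paper's proofs --- is the opposite inequality: if $m_L(\sigma)$ exists and lies in $\sigma$, then $m_L(\sigma)$ is itself a bridge of $\sigma$ (its index exceeds $v_L(\sigma)$, so it divides the lcm of the remaining elements), whence $m_L(\sigma) \succeq_{f(\sigma)} \sbridge(\sigma)$. The failure mode is therefore that $\sbridge(\sigma)$ is a bridge \emph{strictly below} $m_L(\sigma)$, not that some non-bridge slips under $\sbridge(\sigma)$.

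With the inequality corrected, your ``chase'' and ``reordering'' arguments lose their stated mechanism: you are no longer hunting for a stray non-bridge, but for a reason why minimality of $\mathcal{F}_L$ should prevent $\sbridge(\sigma)$ from sitting strictly below $m_L(\sigma)$. Nothing you wrote addresses this, and nothing in the minimality hypothesis obviously rules it out --- the Barile-Macchia algorithm may match $\sigma$ to a different partner than $A_L$ does, and Step~(3) of Algorithm~\ref{algorithm1} can then delete edges in ways the Lyubeznik construction never sees. (There are even $\sigma$ for which $\sbridge(\sigma)$ exists while $m_L(\sigma)$ does not, so the pre-Step-(3) Barile-Macchia matching can be strictly larger than $A_L$.) Your reordering idea would have to simultaneously preserve minimality of the Lyubeznik resolution and force $m_L=\sbridge$; you give no mechanism for either, and the paper's two successful applications both relied on \emph{explicit} Batzies--Welker orders with structural control that a bare minimality hypothesis does not supply. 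In short, the proposal does not contain a new idea beyond Lemma~\ref{lem:Lyu=BM}, and the one concrete technical claim it makes is in error. The conjecture remains open.
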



\section{Edge Ideals of Graphs} \label{sec.edgeIdeal}

This section focuses on edge ideals of graphs. We shall identify classes of graphs whose edge ideals have minimal generalized Barile-Macchia resolutions. 
Throughout this section, $G$ denotes a \emph{simple} graph (i.e., $G$ contains no loops nor multiple edges) with vertex set $V(G) = \{x_1, \dots, x_d\}$ and edge set $E(G)$. As before, let $S = \Bbbk[x_1, \dots, x_d]$ be a polynomial ring over a field $\Bbbk$. Following, for example, \cite{villarreal1990cohen}, the \emph{edge ideal} of $G$ is defined by
\[I(G) = (x_ix_j \mid\{x_i,x_j\} \in E(G)) \subseteq S.\]

It was seen in~\cite{CHM24-first} that, while the property of having a minimal Lyubeznik resolution for edge ideals of graphs is quite well understood, virtually nothing is known about the property of having a minimal Barile-Macchia or a minimal generalized Barile-Macchia resolutions. On the other hand, computational experiments show that Barile-Macchia and generalized Barile-Macchia resolutions are effective in generating the minimal free resolution of edge ideals.

We start with the following statement that is verified by \texttt{Macaulay2}~\cite{M2} computations.

\begin{theorem} \label{thm:BM-graphs}
    For any graph $G$ over at most 6 vertices, $S/I(G)$ has a minimal Barile-Macchia resolution. On the other hand, if $G$ is the following graph on $7$ vertices
    \begin{center}
		\smallestcounterexamplesymb
	\end{center}
    then $S/I(G)$ does not have a minimal Barile-Macchia resolution.
\end{theorem}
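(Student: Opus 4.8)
The statement has two halves, and both are in principle finite verifications, but the write-up should make clear that the positive half (graphs on at most 6 vertices) reduces to the structural results of the paper while the negative half (the displayed 7-vertex graph) is an explicit computation. The plan is to handle the positive direction first by invoking Theorem~\ref{thm:generalized-BM-graphs} (the $8$-vertex computational result) together with the observation that for $d \le 6$ one can pass from the \emph{generalized} Barile-Macchia resolution to an \emph{ordinary} Barile-Macchia resolution; and then to handle the negative direction by exhibiting the graph, computing the Betti numbers of $S/I(G)$ in \texttt{Macaulay2}, and checking that no total order on $\mingens(I(G))$ produces a Barile-Macchia resolution of matching rank in each homological degree.

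\textbf{Positive direction.} First I would fix $G$ on vertex set $\{x_1,\dots,x_d\}$ with $d \le 6$, so $I = I(G)$ is a squarefree monomial ideal generated in degree $2$. The key point is that for such small ideals one checks, over all isomorphism types of graphs on $\le 6$ vertices (there are only finitely many, and \texttt{nauty}~\cite{nauty} enumerates them), that there exists a total order $(\succ)$ on $\mingens(I)$ for which the Barile-Macchia resolution of $S/I$ with respect to $(\succ)$ has $\rank (\mathcal F_A)_i = \beta_i(S/I)$ for all $i$. Concretely, for each graph one iterates over the (finitely many) total orders of $E(G)$, runs Algorithm~\ref{algorithm1} to produce the matching $A$, computes the ranks of the critical sets in each cardinality via Theorem~\ref{thm:morse-resolutions}, and compares with the graded Betti numbers $\beta_i(S/I)$ computed directly. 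Minimality of a Morse resolution is exactly the statement that these ranks agree (by Theorem~\ref{thm:morse-resolutions}, a Morse resolution is always a free resolution of $S/I$, so its ranks dominate the Betti numbers, with equality iff it is minimal). This is the content of the cited \texttt{Macaulay2} verification; in the write-up I would state that this search was carried out and terminates positively for all such $G$.

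\textbf{Negative direction.} Here I would name the displayed graph explicitly, say $G$ on vertices $\{t,u,v,w,x,y,z\}$ with edge set $E(G) = \{tx,\,xw,\,uv,\,ux,\,vw,\,vy,\,xy,\,vz\}$, so $\mingens(I(G))$ has $8$ elements. The claim is that for \emph{every} total order $(\succ)$ on these $8$ generators, the Barile-Macchia resolution induced by Algorithm~\ref{algorithm1} is non-minimal; equivalently, $\rank(\mathcal F_A)_i > \beta_i(S/I(G))$ for some $i$. Since there are only $8! = 40320$ total orders and for each the algorithm and the rank computation are fast, this is a genuine finite check, and I would present it as a \texttt{Macaulay2} computation: compute $\beta_i(S/I(G))$ once, then loop over all orderings, build $A$, tabulate $\bigl(\rank(\mathcal F_A)_i\bigr)_i$, and confirm a strict inequality in each case. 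The only conceptual subtlety worth spelling out is why Barile-Macchia (with a genuine total order, $P = \ZZ^d$, $f = \lcm$) is the right object — one should note that for this $I$ even the \emph{generalized} Barile-Macchia resolution might still be minimal (indeed Theorem~\ref{thm:generalized-BM-graphs} says it is, for all graphs on $\le 8$ vertices), so the non-minimality is a phenomenon strictly about the ordinary, single-order construction. The main obstacle is not mathematical depth but bookkeeping: correctly implementing Algorithm~\ref{algorithm1} — in particular Step~(3), the tie-breaking that removes one of two matched edges sharing the same target — and being careful that the critical-set count really is read off as in Theorem~\ref{thm:morse-resolutions}. Given the small sizes, no cleverness beyond a faithful implementation is needed, and I would simply cite the \texttt{Macaulay2} run as the proof.
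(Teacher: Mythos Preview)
Your actual execution is correct and essentially identical to the paper's: both halves are finite \texttt{Macaulay2} checks (enumerate graphs via \texttt{nauty}, loop over total orders on $\mingens(I(G))$, run Algorithm~\ref{algorithm1}, compare ranks of critical sets against the Betti numbers), and the paper says no more than ``verified by \texttt{Macaulay2} computations.''

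That said, your \textbf{Approach} paragraph contains a genuine misstep you should excise. You propose to deduce the positive half from Theorem~\ref{thm:generalized-BM-graphs} ``together with the observation that for $d \le 6$ one can pass from the \emph{generalized} Barile-Macchia resolution to an \emph{ordinary} Barile-Macchia resolution.'' There is no such passage: a minimal generalized Barile-Macchia resolution uses a \emph{family} $(\succ_p)_{p\in P}$ of total orders, and nothing in the theory lets you collapse this to a single $(\succ)$ while preserving minimality. Your own negative example proves the point---the displayed $7$-vertex graph has a minimal generalized Barile-Macchia resolution by Theorem~\ref{thm:generalized-BM-graphs}, yet you are about to show it has no minimal ordinary one. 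Fortunately, your detailed ``Positive direction'' paragraph silently abandons this reduction and just does the direct enumeration, which is fine; but the plan paragraph as written is wrong and would mislead a reader. Drop the reference to Theorem~\ref{thm:generalized-BM-graphs} (which in any case appears \emph{after} the present theorem in the paper) and simply state that both halves are finite checks.
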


The graph given in Theorem \ref{thm:BM-graphs} is the smallest example whose edge ideal cannot be minimally resolved by Barile-Macchia resolutions, in terms of both the number of vertices and the number of edges. 
For generalized Barile-Macchia resolutions, we can do slightly better. To facilitate this, we will introduce an algorithm to find a minimal generalized Barile-Macchia resolution for $S/I(G)$, if it exists, when $G$ is a graph of at most 10 vertices. 

Observe that, by~\cite[Theorem 4.1]{Kat04}, the graded Betti numbers of $I(G)$ are characteristic-independent in thit case. Because Morse resolutions in general are also characteristic-independent (Remark \ref{rem:characteristic-free}), to verify if a Morse resolution $\mathcal{F}_A$ of $S/I(G)$ is minimal we only need to do so in characteristic 2. Observe further that, for a given monomial $m$, comparing $\beta_{i,m}(S/I(G))$ to the number of critical subsets of $\mingens(I(G))$ with lcm $m$ and cardinality $i$, for all $i \in \NN$, is the same as comparing $\sum_{i=0}^{\pd S/I(G)} \beta_{i,m}(S/I(G))$ to the number of critical subsets of $\mingens(I(G))$ with lcm $m$.



In the following algorithm, if the output is ``True'', then $I(G)$ has a minimal generalized Barile-Macchia resolution, and if the output is ``False'', then it is unknown whether a minimal generalized Barile-Macchia resolution for $I(G)$ exists.

\begin{algorithm} \label{algo:aryaman}
    {\sf
    Input: a graph $G$ with at most 10 vertices. Set $S=\mathbb{Z}/2\mathbb{Z}[V(G)]$ and $\mathcal{V}\coloneqq \mathcal{P}(V(G))$. 
    \begin{enumerate}[label=(\arabic*)]
        \item If $G$ is co-chordal, then return True (see Corollary~\ref{cor:co-chordal-BM}). 
        \item If $\mathcal{V}\neq \emptyset$, pick $V$ in $\mathcal{V}$, and let $\Omega$ be the set of all total orders on $\mingens(I(G_V))$, where $G_V$ denotes the induced subgraph of $G$ with vertices in $V$, and set
        \[
        \mathcal{V}\coloneqq \mathcal{V}\setminus \{V\}.
        \]
        Else return True.
        \item Set $m=\prod_{x\in V}x$. Compute 
            \[
            a\coloneqq \sum_{i=1}^{\pd S/I(G)} \beta_{i,m}(S/I(G)).
            \]
        \item If $\Omega\neq \emptyset$, pick $(\succ)$ in $\Omega$, set
        \[
        \Omega\coloneqq \Omega \setminus \{\succ\}
        \]
        and compute
        \[
            b\coloneqq \#\{ \sigma \subseteq E(G)  ~\big|~ \sigma \text { is Barile-Macchia-critical with respect to } (\succ) \text{ and } \lcm \sigma=m \}.
        \]
        Else return False.
        \item If $a=b$, then go to Step (2). Otherwise, go to Step (4).
    \end{enumerate}
    }
\end{algorithm}

We remark that performing Steps (2)--(5) of the algorithm for a graph $G$ and a subset of vertices $V\subseteq V(G)$ is the same as doing so for the graph $G_V$ and its vertex set $V=V(G_V)$. This is thanks to the Restriction Lemma~\cite[Lemma 2.14]{CHM24-first} (the proof of~\cite[Lemma 2.14]{CHM24-first} holds for Morse resolutions in general). Therefore, in practice, we applied the algorithm to graphs with smaller number of vertices first, and so in Step (2), we only considered $V=V(G)$. This remarkably cut down the running time of the algorithm.

By implementing Algorithm \ref{algo:aryaman} in this fashion, we arrive at the following result.

\begin{theorem} \label{thm:generalized-BM-graphs}
    The edge ideal $I(G)$ has a minimal generalized Barile-Macchia resolution for any graph $G$ with 8 vertices or less.
\end{theorem}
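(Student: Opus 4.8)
The proof of Theorem~\ref{thm:generalized-BM-graphs} is essentially a computer-assisted verification, so the ``proof'' will consist of explaining why the output of Algorithm~\ref{algo:aryaman} is a valid certificate and reporting that it returns \texttt{True} on every graph with at most $8$ vertices. The plan is as follows.

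\textbf{Step 1: Reduce to characteristic $2$ and to total Betti numbers.} First I would recall (from the discussion preceding the algorithm) that by Katzman~\cite[Theorem 4.1]{Kat04} the graded Betti numbers of $I(G)$ for graphs on at most $10$ vertices are independent of $\characteristic\Bbbk$, and that Morse resolutions are characteristic-free by Remark~\ref{rem:characteristic-free}. Hence a Morse resolution $\mathcal{F}_A$ of $S/I(G)$ is minimal over an arbitrary field if and only if it is minimal over $\mathbb{Z}/2\mathbb{Z}$. Moreover, since the basis of $(\mathcal{F}_A)_i$ consists of the critical subsets of $\mingens(I(G))$ with $i$ elements (Theorem~\ref{thm:morse-resolutions}) and each such subset $\sigma$ carries $\ZZ^d$-degree $\lcm(\sigma)$, minimality of $\mathcal{F}_A$ is equivalent to the statement that for every squarefree monomial $m$ the number of critical subsets with $\lcm$ equal to $m$ equals $\sum_{i} \beta_{i,m}(S/I(G))$. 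Since $I(G)$ is generated in degree $2$ and every generator is squarefree, every lcm occurring is squarefree, so it suffices to range $m$ over products of subsets $V \subseteq V(G)$, which is exactly the quantity $a$ computed in Step~(3).

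\textbf{Step 2: Justify the correctness of the algorithm.} The generalized Barile-Macchia resolution is built from a sequence $(\succ_p)_{p\in P}$ of total orders, one for each fiber of an lcm-compatible grading (Theorem~\ref{thm:generalized-BM}). When we take $P = \lcm(I)$ with $f = \lcm$, the fiber $f^{-1}(p)$ for a squarefree $p = \prod_{x\in V} x$ consists precisely of the subsets of $\mingens(I(G_V))$ with full lcm, and by the Restriction Lemma~\cite[Lemma 2.14]{CHM24-first} running Algorithm~\ref{algorithm1} on this fiber equipped with an arbitrary total order $(\succ)$ is the same as running it on $G_V$ with the restriction of $(\succ)$. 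So the count $b$ in Step~(4), taken over all total orders $(\succ)$ on $\mingens(I(G_V))$, exhausts all possible contributions to the degree-$m$ strand of a generalized Barile-Macchia resolution. Therefore, if for \emph{some} choice of order the critical count $b$ matches $a = \sum_i \beta_{i,m}(S/I(G))$ for every $V$, one can assemble these per-fiber choices into a single sequence $(\succ_p)_{p\in P}$ whose associated generalized Barile-Macchia resolution is minimal; this is exactly the situation in which Algorithm~\ref{algo:aryaman} returns \texttt{True}. Conversely a \texttt{False} output merely signals that no such order was found among those tried, which is why the statement is one-directional. I would also note that Step~(1) short-circuits the co-chordal case via Corollary~\ref{cor:co-chordal-BM}.

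\textbf{Step 3: Run the verification.} Using \texttt{nauty}~\cite{nauty} to enumerate all graphs on at most $8$ vertices up to isomorphism, and \texttt{Macaulay2}~\cite{M2}/\texttt{SageMath}~\cite{sagemath} to compute the Betti numbers $\beta_{i,m}$ and the Barile-Macchia-critical subsets, I would apply Algorithm~\ref{algo:aryaman}, processing graphs in order of increasing vertex number and (as remarked after the algorithm) only invoking Steps~(2)--(5) with $V = V(G)$ so that the Restriction Lemma handles proper induced subgraphs automatically. The output is \texttt{True} in every case, which establishes the theorem.

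\textbf{Main obstacle.} The conceptual content is modest; the genuine difficulty is computational feasibility. For a graph on $8$ vertices the edge ideal can have on the order of $20$--$28$ generators, and $\Omega$ --- the set of \emph{all} total orders on $\mingens(I(G_V))$ --- is astronomically large, so a naive search is hopeless. The real work is in pruning: exploiting the Restriction Lemma to reuse results on smaller induced subgraphs, discarding co-chordal graphs immediately, and organizing the search over orders so that a successful order is found quickly (in practice one almost never needs to examine more than a handful). Making the search terminate in reasonable time on the full census of $8$-vertex graphs is the crux, and it is what limits the result to $8$ rather than $9$ or $10$ vertices.
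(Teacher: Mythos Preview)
Your proposal is correct and follows essentially the same approach as the paper: the paper's proof is the single line ``The database~\cite{GBM} contains the total order on $\mingens(I(G))$ for any such $G$,'' and the justification you spell out in Steps~1--2 (characteristic independence via Katzman, reduction to total critical counts per lcm, and the Restriction Lemma allowing one to work with $G_V$ and $V=V(G)$ only) is exactly the material the paper places in the paragraphs surrounding Algorithm~\ref{algo:aryaman} rather than in the proof itself. Your Step~3 then matches the paper's computational claim, so nothing substantive differs.
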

\begin{proof}
    The database~\cite{GBM} contains the total order on $\mingens(I(G))$ for any such $G$.
\end{proof}

While Algorithm \ref{algo:aryaman}, in theory, works for any graph with at most 10 vertices (or, in fact, any graph whose graded Betti numbers are characteristic-independent), our supercomputers cannot keep up with graphs over 9 vertices. At least for 220000 out of 261080 graphs with 9 vertices, their edge ideals have minimal generalized Barile-Macchia resolutions (also available in the database~\cite{GBM}). The remaining graphs on $9$ vertices required computational speeds that are not available to us and consumed time beyond our capacity. However, we expect that the edge ideals of all graphs on at most 10 vertices have minimal generalized Barile-Macchia resolutions.

On the other hand, since Morse resolutions do not depend on the characteristic, the characteristic dependence of the graded Betti numbers of $I(G)$ may determine if $I(G)$ does not have a minimal generalized Barile-Macchia resolution. The smallest graphs, whose edge ideals have characteristic-dependent graded Betti numbers, contain 11 vertices, and there are four of them~\cite[Appendix A]{Kat04}. The following question is certainly of interest.

\begin{question}
    Characterize the graphs $G$ for which $I(G)$ has a minimal Barile-Macchia or a minimal generalized Barile-Macchia resolution.
\end{question}


In general, showing that a (generalized) Barile-Macchia is minimal is typically very difficult. However, there is a sufficient condition that is more tractable via the notion of \emph{bridge-friendly} monomial ideals, which we shall now recall from~\cite{CK24}. 

\begin{definition} \label{defi:types-second-part}
Let $I \subseteq S$ be a monomial ideal and fix a total order $(\succ)$ on $\mingens(I)$ (see also Definition~\ref{defi:types}).
\begin{enumerate} 
    \item Given $\sigma\subseteq \mingens(I)$ and $m\in \mingens(I)$  such that $\lcm(\sigma \cup \{m\})=\lcm(\sigma\setminus \{m\})$, we say that $m$ is a \emph{gap} of $\sigma$ if $m\notin \sigma$.
    \item A monomial $m\in \mingens(I)$ is called a \emph{true gap} of $\sigma\subseteq \mingens(I)$ if 
        \begin{enumerate}
            \item[(a)]  it is a gap of $\sigma$, and 
            \item[(b)]  the set $\sigma \cup \{m\}$ has no new bridges dominated by $m$. In other words, if $m'$ is a bridge of $\sigma \cup \{m\}$ and $m\succ m'$, then $m'$ is a bridge of $\sigma$.
        \end{enumerate}
    Equivalently, $m$ is not a true gap of $\sigma$ either if $m$ is not a gap of $\sigma$ or if there exists $m'\prec m$ such that $m'$ is a bridge of $\sigma \cup \{m\}$ but not one of $\sigma$.
    \item A subset $\sigma\subseteq \mingens(I)$ is called \emph{potentially-type-2} if it has a bridge not dominating any of its true gaps, and \emph{type-1} if it has a true gap not dominating any of its bridges. Moreover, $\sigma$ is called \emph{type-2} if it is potentially-type-2 and whenever there exists another potentially-type-2 $\sigma'$ such that 
    \begin{equation*} 
    	\sigma' \setminus \{\sbridge(\sigma')\}=\sigma \setminus \{\sbridge(\sigma)\},
    \end{equation*}
    we have $\sbridge(\sigma')\succ \sbridge(\sigma)$.
\end{enumerate}
\end{definition}

\begin{definition}[{\cite[Definition 2.27]{CK24}}]
    A monomial ideal $I \subseteq S$ is \emph{bridge-friendly} if there exists a total order $(\succ)$ on $\mingens(I)$ such that all potentially-type-2 subsets of $\mingens(I)$ are type-2.    
\end{definition}

A bridge-friendly monomial ideal has a minimal Barile-Macchia resolution by~\cite[Theorem 2.29]{CK24}. In the rest of this section, we focus on connected unicyclic graphs and characterize those whose edge ideals are bridge-friendly. 

The following statement identifies small graphs that are not bridge-friendly and serve as ``forbidden structures'' for this property.

\begin{proposition}\label{prop:non-bridgefriendly-graphs}
    Let $G$ be one of the following graphs:
    \begin{enumerate}
        \item The net graph \sunletsymb{3}.
        \item The 5-sunlet graph \sunletsymb{5}.
        \item The 123-trimethylcyclohexane graph \sixcycleconsecutivesymb.
        \item The 135-trimethylcyclohexane graph  \sixcyclealternatesymb.
    \end{enumerate}
    
    Then $I(G)$ is not bridge-friendly.
\end{proposition}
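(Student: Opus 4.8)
The strategy is to prove, for each of the four graphs $G$ listed, that \emph{no} total order $(\succ)$ on $\mingens(I(G))$ makes every potentially-type-2 subset type-2. Since each $G$ has only a small number of edges (hence few minimal generators), the proof is a finite, order-by-order case analysis, but the aim is to isolate a short combinatorial obstruction that works uniformly rather than enumerate all orderings. The key observation to exploit is the definition of type-2: a potentially-type-2 set $\sigma$ fails to be type-2 exactly when there is another potentially-type-2 set $\sigma'$ with $\sigma' \setminus \{\sbridge(\sigma')\} = \sigma \setminus \{\sbridge(\sigma)\}$ and $\sbridge(\sigma') \prec \sbridge(\sigma)$. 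So the first step is to identify, for each graph, a pair (or a small family) of potentially-type-2 subsets that share a common ``deletion'' $\tau = \sigma \setminus \{\sbridge(\sigma)\}$ but whose required smallest bridges are forced to be \emph{different} generators; then at least one of the two sets is not type-2, regardless of $(\succ)$.

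First I would set up notation for the edge sets: label the edges of $G$ as $e_1, \dots, e_n$ (matching the pictures), and for each graph write down explicitly the subsets $\sigma$ of edges whose least common multiple forces a bridge, i.e.\ where some edge $e_j \in \sigma$ satisfies $\lcm(\sigma) = \lcm(\sigma \setminus \{e_j\})$ — concretely, $e_j$ is a bridge of $\sigma$ iff both of its vertices appear on other edges of $\sigma$. Second, for each candidate $\sigma$ I would compute its true gaps (Definition~\ref{defi:types-second-part}(2)): a gap $m$ of $\sigma$ is a true gap unless adjoining it creates a new $\succ$-smaller bridge. Third, I would determine which $\sigma$ are potentially-type-2 (has a bridge not dominating any true gap) — this is where the cycle structure ($C_3$, $C_5$, or $C_6$ with pendant triangles / alternating or consecutive pendant edges) produces the conflict. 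Fourth, exhibit two potentially-type-2 subsets $\sigma_1, \sigma_2$ with $\sigma_1 \setminus \{\sbridge(\sigma_1)\} = \sigma_2 \setminus \{\sbridge(\sigma_2)\}$; by symmetry of the graph the two candidate smallest bridges play interchangeable roles, so whichever one is declared $\succ$-smaller, the set on the other side violates the type-2 condition. Concluding: such an order cannot exist, so $I(G)$ is not bridge-friendly.

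The main obstacle I anticipate is not any single computation but \emph{packaging} the argument so it is not a brute-force enumeration over all $n!$ orderings: I need to pin down the precise small configuration — presumably three edges forming a triangle together with the pendant edges at its vertices, or the six-cycle analogue — for which the deletion $\tau$ is common to two potentially-type-2 supersets, and argue that \emph{being potentially-type-2 is insensitive to the order} for these particular sets (their bridges and true gaps are determined by incidence alone, because no new bridges appear upon adjoining any gap). Once that order-independence is established, the symmetry of each of the four graphs — the net and the $5$-sunlet are vertex-transitive on the cycle, and the two trimethylcyclohexane graphs have a dihedral symmetry permuting the three pendant edges — forces the contradiction. A secondary technical point is to double-check, using the Restriction Lemma~\cite[Lemma 2.14]{CHM24-first}, that it suffices to examine $\sigma$ supported on the whole graph (i.e.\ with $\lcm(\sigma)$ equal to the product of all variables), so that the analysis reduces to a single top-degree strand; this keeps the bookkeeping manageable. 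I would present the $C_3$ case (the net graph) in full detail and then indicate how the $C_5$ and $C_6$ cases follow by the same mechanism with the obvious modifications.
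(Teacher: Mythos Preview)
The paper's proof is a one-line appeal to computer verification (\texttt{SageMath}), so your theoretical plan is a genuinely different route. The symmetry heuristic is sound in spirit, and if it worked it would be more informative than a brute-force check. However, the proposal has a concrete gap at exactly the point you flag as the main obstacle.

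Your plan hinges on showing that, for certain well-chosen subsets $\sigma$, the property ``potentially-type-2'' (and the identity of $\sbridge(\sigma)$) is determined by incidence alone, because ``no new bridges appear upon adjoining any gap.'' That last parenthetical is false already in the net graph. Take $\sigma = \{bc, ca, cc'\}$, where $a,b,c$ are the triangle vertices and $c'$ the leaf at $c$. Then $ab$ is a gap of $\sigma$, and $\sigma$ has \emph{no} bridges; but $\sigma \cup \{ab\}$ has bridges $ab$, $bc$, and $ca$. Thus adjoining the gap $ab$ creates the genuinely new bridges $bc$ and $ca$, and whether $ab$ is a \emph{true} gap of $\sigma$ depends on whether $ab$ is $\succ$-smallest among $\{ab,bc,ca\}$. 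So the order-independence you need does not come for free, and the pair-of-subsets argument cannot be set up without first pinning down substantial information about $(\succ)$. In practice this forces a case split on the relative order of the cycle edges and then, within each case, on how the pendant edges interleave---at which point the ``uniform'' argument has degenerated into something close to the exhaustive check the paper performs by machine.

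A secondary issue: the Restriction Lemma does not let you reduce to the top multidegree. It says that bridge-friendliness passes to induced subgraphs, which is useful for propagating \emph{non}-bridge-friendliness upward (and is exactly how Proposition~\ref{prop:non-bridgefriendly-graphs} is used in Theorem~\ref{thm:bridgefriendly-unicyclic}), but it does not tell you that a witness to failure must have $\lcm(\sigma)$ equal to the product of all variables. The type-2 comparison is indeed local to a fixed $\lcm$, but for an arbitrary order the obstructing pair $(\sigma,\sigma')$ may live in a proper multidegree, and you would have to rule that out separately.
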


\begin{proof}
    Verified with \texttt{SageMath} computations.
\end{proof}

Recall that a connected graph is called a \emph{tree} if it has no cycles. A connected unicyclic graph consists of a cycle, say $C_n$, that is joined at its vertices with at most $n$ trees. It is easy to see that all the graphs in Proposition \ref{prop:non-bridgefriendly-graphs} are unicyclic.

For a tree $T$, consider the following particular total order ($\succ$) on the edge set $E(T)$ of $T$. Fix vertex $x_0$ in $T$, and view $T$ as a \emph{rooted} tree with root $x_0$. Each vertex $v \in V(T)$ determines a unique path from $v$ to $x_0$. For $i \in \NN$, let 
\begin{equation*} 
	V_i \coloneqq \left\{v \in V(T) \mid \dist_T(v,x_0) =i\right\}
\end{equation*} 
be the set of vertices in $T$ whose distance to $x_0$ is $i$. Obviously, $V(T)=\bigcup_{i\in \mathbb{Z}_{\geq 0}} V_i$. Let $c_i = \card{V_i}$, for $i \in \ZZ_{\ge 0}$. We shall consider a  specific labeling for the vertices in $T$ given by writing 
\begin{equation*} 
	V_i = \left\{x_{i,j} ~\middle|~ 1 \le j \le c_i\right\},
\end{equation*}
with the convention that $x_{0,1} = x_0$. 
With respect to this particular labeling of the vertices in $T$, define the following total order $(\succ)$ on $E(T)$:
    \[
    x_{i,j}x_{i+1,k}\succ x_{i',j'}x_{i'+1,k'} \text{ if } \begin{sqcases}
        i<i'; \text{ or}\\
        i=i' \text{ and } j<j';\text{ or} \\
        i=i', j=j' \text{ and } k<k'. 
    \end{sqcases}
    \]

Our next main result is stated as follows.

\begin{theorem}\label{thm:bridgefriendly-unicyclic}
    Let $G$ be a connected unicyclic graph. Then, $I(G)$ is bridge-friendly if and only if either
    \begin{enumerate} 
    \item $G$ contains a $C_3$ or a $C_5$ with one vertex of degree 2; or 
    \item $G$ contains a $C_6$ with two opposite vertices of degree 2.
    \end{enumerate}
\end{theorem}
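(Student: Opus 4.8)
The plan is to split the proof into the two directions announced after the statement of Theorem~\ref{thm:bridgefriendly-unicyclic}, using the fact that a connected unicyclic graph $G$ is a cycle $C_n$ with trees attached at some of its vertices.

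\emph{Necessity.} Suppose $I(G)$ is bridge-friendly. The first observation is a hereditary principle: by the Restriction Lemma~\cite[Lemma 2.14]{CHM24-first}, if $H$ is an induced subgraph of $G$ then bridge-friendliness of $I(G)$ forces bridge-friendliness of $I(H)$ (one must check that the restriction of a total order witnessing bridge-friendliness still works, using the characterization in terms of potentially-type-2 and type-2 subsets). Granting this, the forbidden structures in Proposition~\ref{prop:non-bridgefriendly-graphs} — the net $\sunletsymb{3}$, the $5$-sunlet $\sunletsymb{5}$, and the two trimethylcyclohexanes $\sixcycleconsecutivesymb$ and $\sixcyclealternatesymb$ — cannot occur as induced subgraphs of $G$. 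The next step is purely combinatorial: show that a connected unicyclic graph avoiding all four of these induced subgraphs, together with the constraint from~\cite[Proposition 4.2]{CHM24-first} (which rules out long induced cycles and presumably forces the cycle length $n \in \{3,5,6\}$ or similar), must be of the form described in (1) or (2). Concretely: if $n = 3$, the net being forbidden limits how trees attach to $C_3$, leaving a $C_3$ with a vertex of degree $2$; if $n = 5$, the $5$-sunlet being forbidden similarly leaves a $C_5$ with a vertex of degree $2$; if $n = 6$, the two trimethyl graphs being forbidden force the degree-$2$ vertices of $C_6$ to be exactly an opposite pair; and cycles of other lengths are excluded outright by~\cite[Proposition 4.2]{CHM24-first}. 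This is a finite case analysis on how pendant trees can attach to $C_3$, $C_5$, $C_6$ without creating a forbidden configuration.

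\emph{Sufficiency.} Conversely, assume $G$ has one of the prescribed forms. Here the strategy is to produce an explicit total order on $E(G)$ witnessing bridge-friendliness, namely all potentially-type-2 subsets of $\mingens(I(G))$ are type-2. The natural candidate is to order the cycle edges first (in some fixed cyclic order, starting from the edge incident to the degree-$2$ vertex / vertices), and then order the edges of the attached trees using the rooted-tree total order $(\succ)$ displayed just before the theorem, with each tree rooted at its point of attachment to the cycle and distances inherited accordingly. One then analyzes, for each potentially-type-2 subset $\sigma \subseteq E(G)$, its smallest bridge $\sbridge(\sigma)$ and the competing potentially-type-2 sets $\sigma'$ with $\sigma' \setminus \{\sbridge(\sigma')\} = \sigma \setminus \{\sbridge(\sigma)\}$, verifying the inequality $\sbridge(\sigma') \succ \sbridge(\sigma)$ in all cases; equivalently, one checks that whenever $\sigma$ has a bridge not dominating any true gap, it is the one selected. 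Because bridges of an edge ideal are controlled by a simple combinatorial condition (a monomial $xy$ is a bridge of $\sigma$ iff the vertices $x$, $y$ already appear among other edges of $\sigma$), this reduces to a graph-theoretic bookkeeping on the cycle $C_3$, $C_5$, or $C_6$ together with its pendant trees; the tree part can be handled by leveraging the structure already used in Section~\ref{sec.hypertree} (trees are bridge-friendly via the displayed order), and the main new content is checking the interaction between the short cycle and the trees hanging off it.

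\emph{Main obstacle.} I expect the hard part to be the sufficiency direction, specifically the case analysis of potentially-type-2 subsets that contain a mixture of cycle edges and tree edges: one must rule out a ``bad'' $\sigma$ whose smallest bridge dominates a true gap, and true gaps are genuinely subtle to track (condition (b) in Definition~\ref{defi:types-second-part}(2) involves new bridges created by adding the gap). The restriction to $C_3$, $C_5$, $C_6$ with a prescribed degree-$2$ pattern is exactly what makes this tractable — the $C_6$ case with a single degree-$2$ vertex or with two adjacent degree-$2$ vertices fails (that is the content of the forbidden trimethyl graphs) — so the proof will hinge on pinpointing why the opposite-pair configuration on $C_6$, and only that one, survives. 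Establishing the hereditary principle for bridge-friendliness carefully is a secondary technical point that the necessity direction rests on.
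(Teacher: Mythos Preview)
Your necessity direction matches the paper's argument essentially verbatim: \cite[Proposition 4.2]{CHM24-first} forces $n\in\{3,5,6\}$, and then the forbidden induced subgraphs of Proposition~\ref{prop:non-bridgefriendly-graphs} pin down the degree-2 pattern. (The hereditary principle you flag is indeed implicitly used and follows from the Restriction Lemma.)

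For sufficiency you have the right total order --- cycle edges first, starting at the degree-2 vertex, followed by the rooted-tree orders on the pendant trees --- but you are missing the two shortcuts that make the paper's proof short. First, the $C_3$ case is disposed of immediately because such a $G$ is chordal, so \cite[Theorem 4.8]{CHM24-first} applies. Second, and more importantly, for $C_5$ and $C_6$ the paper does \emph{not} attempt the direct verification you outline (checking, for every potentially-type-2 $\sigma$, that it is type-2). Instead it argues by contradiction via \cite[Proposition 4.1]{CHM24-first}: if $I(G)$ is not bridge-friendly with respect to \emph{any} order, then for the specific order above there exist $\tau\subseteq E(G)$ and edges $m_1\succ m_2\succ m_3$ with $m_3=yz$, no other edge of $\tau$ touching $y$ or $z$, $m_1,m_2$ true gaps of $\tau$, $\sbridge(\tau\cup\{m_1\})=m_1$, and $m_1$ dominating no true gap of $\tau\cup\{m_1\}$. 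The condition on $m_3$ is so restrictive that in each case it forces $m_3$ to be the last cycle edge, which in turn pins down $m_1$ and $m_2$ uniquely; a few lines then exhibit an edge (namely $x_2x_3$) that \emph{is} a true gap of $\tau\cup\{m_1\}$ dominated by $m_1$, giving the contradiction.

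So your plan is not wrong, but the ``main obstacle'' you anticipate --- a full case analysis of potentially-type-2 subsets mixing cycle and tree edges --- is exactly what the paper avoids. The content you are missing is the reduction provided by \cite[Proposition 4.1]{CHM24-first}, which collapses the problem to analysing a single forced configuration rather than all $\sigma$.
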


\begin{proof}
    By~\cite[Proposition 4.2]{CHM24-first}, if $I(G)$ is bridge-friendly, then the only cycle in $G$ must be one of $C_3, C_5$, or $C_6$. Together with the structure of forbidden unicyclic graphs given in Proposition~\ref{prop:non-bridgefriendly-graphs}, it is follows that this unique cycle in $G$ has to be either a $C_3$ with a vertex of degree 2, or a $C_5$ with a vertex of degree 2, or a $C_6$ with two opposite vertices of degree 2. This establishes the ``only if" part. For the ``if" part, let $G$ be a unicyclic graph of the described form. We will show that $I(G)$ is bridge-friendly. 
    
    If $G$ contains a $C_3$ with a vertex of degree 2, then the conclusion follows from~\cite[Theorem 4.8]{CHM24-first}, where bridge-friendly edge ideals of chordal graphs are fully characterized.
    Assume that $G$ contains a $C_5$ or a $C_6$. By contradiction, suppose that $I(G)$ is not bridge-friendly.  
    
    By~\cite[Proposition 4.1]{CHM24-first}, for any total order $(\succ)$ on $E(G)$, there exists a collection of edges $\tau \subseteq E(G)$ and edges $m_1\succ m_2 \succ m_3$ in $E(G)$ such that if we set $m_3=yz$, then no other edge in $\tau$ contains $y$ or $z$; $m_1,m_2$ are true gaps of $\tau$, $m_1$ does not dominate any true gap of $\tau\cup \{m_1\}$, and $\sbridge(\tau\cup \{m_1\})=m_1$. 
    
    Consider the case where $G$ contains a $C_5$, whose edges are $\{x_1x_2,x_2x_3,x_3x_4,x_4x_5,x_1x_5\}$, and assume that $x_1$ is a a vertex of degree 2 in $G$. Particularly, the neighbors of $x_1$ in $G$ are exactly $x_2$ and $x_5$. For $i = 2, \dots, 5$, we denote the tree attached to the vertex $x_i$ of the unique $C_5$ in $G$ by $T_i$, and view $T_i$ as a rooted tree with root $x_i$. For each rooted tree $T_i$, $i =2, \dots, 5$, let $(\succ)$ denote the total order on its edges as described above. We extend these into a total order $(\succ)$ on $E(G)$ as follows:
    \[
    x_1x_5\succ x_1x_2\succ x_2x_3\succ x_3x_4\succ x_4x_5\succ E(T_2)\succ \cdots \succ E(T_5).
    \]
    
    The edge $m_3$ satisfies the property that there are two other edges (with respect to $(\succ)$) dominating it and containing the its two ends. The only such possibility is $m_3=x_4x_5$. This forces $m_1=x_1x_5$, $m_2=x_3x_4$, and in particular, implies that $\tau$ does not have any edge, other than $m_3$, that contains $x_4$ or $x_5$. Since $m_1=\sbridge(\tau \cup \{m_1\})$, we have $x_1x_2\in \tau$, and $x_1x_2$ is not a bridge of $\tau\cup \{m_1\}$. Hence $\tau$ does not have any edge, other than $x_1x_2$, that contains $x_2$, including $x_2x_3$. Next, the fact that $m_2=\sbridge(\tau \cup \{m_2\})$ implies that no edge in $\tau\cup \{m_2\}$ containing $x_3$ is a bridge of $\tau\cup \{m_2\}$. Since $m_2$ and $x_2x_3$ share the vertex $x_3$, no edge in $\tau\cup \{m_1, x_2x_3\}$ containing $x_3$ is a bridge of $\tau\cup \{m_1, x_2x_3\}$, either. Combining this with the above result that $\tau$ does not have any edge, other than $x_1x_2$, that contains $x_2$, the set $\tau \cup \{m_1, x_2x_3\}$ does not have any bridge smaller than $x_2x_3$ itself. By definition, $x_2x_3$ is a true gap of $\tau\cup \{m_1\}$, and hence $m_1$ dominates a true gap in $\tau\cup \{m_1\}$, a contradiction.

    Finally, suppose that $G$ contains a $C_6$, whose edges are $\{x_1x_2,x_2x_3,x_3x_4,x_4x_5,x_5x_6,x_1x_6\}$, and assume that $x_1$ and $x_4$ are of degree 2. As before, for $i \not= 1, 4$, let $T_i$ denoted the rooted tree attached to the vertex $x_i$ on the unique $C_6$ in $T$. We extend the total order $(\succ)$ on  $E(T_i)$'s to that on $E(G)$ in the same manner as before, namely,
    \[
    x_1x_6\succ x_1x_2\succ x_2x_3\succ x_3x_4\succ x_4x_5\succ x_5x_6\succ E(T_1)\succ \cdots \succ E(T_6).
    \]
    By similar arguments, the only possibility for $m_3$ is $m_3=x_5x_6$. This forces $m_1=x_1x_6$, $m_2=x_4x_5$, and in particular, implies that $\tau$ does not have any edge, other than $m_3$, that contains $x_5$ or $x_6$. Since $m_1$ and $m_2$ are both gaps of $\tau$, and $x_1$ and $x_4$ are both of degree 2, we must have $x_1x_2,x_3x_4\in \tau$. The fact that $m_1=\sbridge(\tau\cup \{m_1\})$, in particular, implies that $x_2x_3\notin \tau$, and that $\tau\cup \{m_1\}$ has exactly one bridge, namely $m_1$ itself. Since $\tau\cup \{m_1\}$ already has $x_1x_2$ and $x_3x_4$, the set of all bridges of $\tau\cup \{m_1, x_2x_3\}$ is a subset of $\{m_1,x_1x_2,x_3x_4\}$. Since $x_4$ is of degree 2, $x_3x_4$ is not a bridge of $\tau\cup \{m_1, x_2x_3\}$. In summary, $\tau\cup \{m_1, x_2x_3\}$ does not have a bridge dominated by  $x_2x_3$ itself. By definition, $x_2x_3$ is a true gap of $\tau\cup \{m_1\}$, and hence $m_1$ dominates a true gap in $\tau\cup \{m_1\}$, a contradiction.
\end{proof}


\section{Edge Ideals of Rooted Hypertrees}\label{sec.hypertree}

In this section, we study squarefree monomial ideals in more general contexts, i.e., those that are not necessarily generated in degree 2. These are viewed as edge ideals of hypergraphs. Our results show that edge ideals of rooted hypertrees possess minimal Barile-Macchia resolutions. Particularly, our results generalize and extend many known results on edge ideals of trees and rooted trees.

A \emph{hypergraph} $\H = (V,\E)$ consists of a vertex set $V = \{x_1, \dots, x_d\}$ and an edge set $\E$, whose elements are subsets of $V$. We restrict our attention to \emph{simple} hypergraphs; that is, when there is no nontrivial containment between the edges in $\E$. A simple hypergraph is also referred to as a \emph{Sperner system}. A simple graph is a simple hypergraph in which each edge has cardinality 2.
As before, let $S = \Bbbk[x_1, \dots, x_d]$ be a polynomial ring over a field $\Bbbk$. The \emph{edge ideal} of a hypergraph $\H$ is constructed in a similar fashion as that of a graph (see \cite{HVT2008}). Particularly,
\[
I(\mathcal{H})\coloneqq \left\langle \prod_{x\in e} x ~\middle\vert~ e \in \E\right\rangle \subseteq S.
\]

A hypergraph is equipped with various graphical structures. One such structure comes from the concept of host graphs. This concept has motivations from optimization theory; see, for instance~\cite{Ber90, BDCV98}. Specifically, a \emph{host graph} of a hypergraph $\H = (V,\E)$ is a graph $H$ over the same vertex set $V$ such that, for each edge $e \in \E$, the induced subgraph of $H$ over the vertices in $e$ is a connected graph. Note that the complete graph is always a host graph of any hypergraph over the same vertex set. Also, a given hypergraph may have several host graphs.

\begin{figure}[h]
        \begin{center}
            \includegraphics{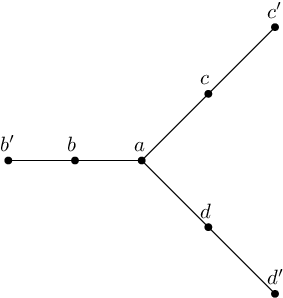}
    	\end{center}
        \caption{A host graph of $\mathcal{H}$. \label{fig.1}}
\end{figure}
\begin{example} \label{ex:hypertree-non-rooted}
Consider the hypergraph $\H$ with edges
$$\left\{ \{a,b,b'\}, \{a,c,c'\}, \{a,d,d'\}, \{a,b,c\}, \{a,c,d\}, \{a,b,d\}\right\}.$$
It is easy to see that Figure \ref{fig.1} depicts a host graph of $\H$.
\end{example}

\begin{definition}
    A hypergraph is called a \emph{hypertree} (respectively, \emph{hyperpath}) if it has a host graph that is a tree (respectively, path).
\end{definition}

This concept of hypertrees has been studied in graph theory and has found many applications in optimization theory (cf.~\cite{Ber90, BDCV98}). We now consider a particular class of hypertrees, whose edge ideals encompass many important classes of edge ideals that have been much studied in the literature, for example, edge ideals of trees~\cite{BM20,CK24} and path ideals of rooted trees~\cite{BHO11}.

\begin{definition} \label{def.rootedtree}
    A hypertree $\H=(V,\E)$ is called \emph{rooted} at a vertex $x \in V$ if there is a host graph $H$ of $\H$, that is a tree, with the property that each edge in $\H$ consists of vertices of different distances from $x$ in $H$. In this case, $x$ is called the \emph{root} of the hypertree $\H$.
\end{definition}

The main result of this section is stated as follows.

\begin{theorem}\label{thm:rooted-hypertree-friendly}
    Let $\H$ be a rooted hypertree. Then, $I(\H)$ is bridge-friendly. In particular, it has a minimal Barile-Macchia resolution.
\end{theorem}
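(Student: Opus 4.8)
The plan is to construct an explicit total order $(\succ)$ on $\mingens(I(\H))$ for which every potentially-type-2 subset of $\mingens(I(\H))$ is in fact type-2; by~\cite[Definition 2.27]{CK24} this is precisely what ``bridge-friendly'' means, and then~\cite[Theorem 2.29]{CK24} gives the minimal Barile-Macchia resolution for free. The order should come from the combinatorial data of a host tree: fix a host tree $H$ of $\H$ rooted at the vertex $x$, and use the distance function $\dist_H(-,x)$ to define a rank on the vertices of $V$, exactly as was done in Section~\ref{sec.edgeIdeal} for trees. Since every edge $e \in \E$ consists of vertices of pairwise distinct distances from $x$, each edge $e$ has a well-defined ``top'' vertex (the one farthest from $x$) and, more generally, its sorted distance-sequence. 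I would order $\mingens(I(\H))$ first by the distance of the top vertex (smaller distance $\Rightarrow$ larger in $\succ$, to mimic the convention in the tree case), breaking ties lexicographically using the rest of the distance-sequence and then an arbitrary but fixed labeling of vertices within each distance class.

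\textbf{Key steps, in order.} First I would set up the notation: fix the host tree $H$, root $x$, the layers $V_i = \{v : \dist_H(v,x) = i\}$, a labeling $V_i = \{x_{i,j}\}$, and then write down the total order $(\succ)$ on the monomials $m_e = \prod_{v \in e} v$ as described above. Second, I would establish the basic structural lemma about bridges: if $\sigma \subseteq \mingens(I(\H))$ and $m_e$ is a bridge of $\sigma$, then every vertex of $e$ already appears in $\bigcup_{m_{e'} \in \sigma,\, e' \ne e} e'$ — and I would analyze, using the \emph{tree} structure of $H$ and the distinct-distances condition, exactly which vertex of $e$ can be the ``newly covered'' obstruction; the point is that in a tree, the connectivity constraint forces a rigid overlap pattern. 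Third, I would prove the crucial claim: for the chosen order, whenever $\tau \cup \{m\}$ acquires a new bridge $m'$ with $m \succ m'$, one can exhibit a genuine gap of $\tau$ dominated by the relevant bridge, contradicting potential-type-2-ness; equivalently, following the contrapositive route of~\cite[Proposition 4.1]{CHM24-first}, I would assume a ``bad configuration'' $m_1 \succ m_2 \succ m_3$ and $\tau$ as in that proposition and derive a contradiction from the host-tree geometry — this mirrors the $C_5$/$C_6$ analysis in the proof of Theorem~\ref{thm:bridgefriendly-unicyclic} but now with trees, where it should actually be \emph{cleaner} because there is no cycle to worry about. Finally, I would invoke~\cite[Theorem 2.29]{CK24} to conclude minimality of the Barile-Macchia resolution.

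\textbf{Main obstacle.} The delicate point is the bridge analysis for hyperedges of arbitrary cardinality: for ordinary edge ideals of trees, a bridge of $\sigma$ is an edge both of whose endpoints are covered elsewhere, and the tree structure makes the covering pattern essentially unique; for hyperedges, ``$\lcm(\sigma \cup \{m_e\}) = \lcm(\sigma \setminus \{m_e\})$'' means \emph{every} vertex of $e$ is covered by the other edges of $\sigma$, and I must show that the rooted-hypertree structure still pins down a rigid enough picture that the top vertex (or whichever vertex controls the order) behaves predictably. Concretely, I expect the heart of the argument to be a lemma saying: if $m_e$ is a true gap of $\tau$ and $m_{e'} \in \tau$ shares the top vertex of $e$, then in the host tree the paths from those shared high vertices back toward the root overlap in a way that produces a forced smaller bridge — and making this precise for all cardinalities, rather than just $|e| = 2$, is where the careful casework lives. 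I would manage it by always reducing to the behavior along a single root-to-leaf path in $H$ (using that $H$ is a tree, so the union of the relevant edges, being connected subgraphs, has a tree-like structure), thereby localizing the combinatorics to the much simpler rooted-path/rooted-tree situation already handled in~\cite{BHO11, CK24}.
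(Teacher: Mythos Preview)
Your overall strategy --- fix a host tree $H$ rooted at $x$, use the distance layers to define a total order on $\mingens(I(\H))$, and verify bridge-friendliness via a criterion from \cite{CHM24-first} --- is exactly the paper's. Two specific choices you make diverge from the paper, and both matter.

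First, the ordering statistic. You order by the \emph{top} vertex (largest distance to the root); the paper instead sets $\min(m) = \min\{\,j : x_i^{(j)} \mid m \text{ for some } i\,\}$ and declares $m \succ m'$ whenever $\min(m) < \min(m')$. For ordinary trees these coincide (each edge has $\max = \min + 1$), so ``mimicking the tree case'' does not single out your choice. The reason $\min$ is the correct statistic is the asymmetry of a rooted tree: every vertex has a \emph{unique predecessor} but possibly many children. Since each hyperedge of $\H$ is a vertical path in $H$, two hyperedges sharing a vertex $v$ must coincide on every vertex from $v$ down to the larger of their two $\min$-values --- there is simply no branching toward the root. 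No such rigidity exists going upward, so the top-vertex statistic does not control overlaps; the ``careful casework'' you anticipate is a symptom of having picked the wrong end.

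Second, the reduction. You plan to invoke \cite[Proposition~4.1]{CHM24-first}, which drags in a subset $\tau$ and a configuration $m_1 \succ m_2 \succ m_3$ with several side conditions. The paper instead uses \cite[Lemma~2.9 and Remark~2.12]{CHM24-first}, reducing bridge-friendliness to a bare three-monomial statement: whenever $y \mid m_1, m_3$, $y \nmid m_2$, $z \mid m_2, m_3$, $z \nmid m_1$ for distinct vertices $y,z$, one must have $m_3 \succ m_1$ or $m_3 \succ m_2$. With the $\min$-order this is immediate: assuming $\rank y > \rank z$, the unique-predecessor observation above forces $\min(m_1) > \rank z \ge \min(m_3)$, hence $m_3 \succ m_1$. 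No $\tau$, no bridge analysis, no casework on edge cardinality.

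So the ``main obstacle'' you identify dissolves once you (a) order by $\min$ rather than by the top vertex and (b) use the three-monomial criterion of Lemma~2.9 rather than Proposition~4.1. Your route may be salvageable, but as written it is a plan aimed at the harder target with the less suitable tool.
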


\begin{proof}
    Let $\mathcal{H}=(V,\mathcal{E})$ be a rooted hypertree with root $x_1^{(0)}$, and let $H$ be its host graph, as in Definition \ref{def.rootedtree}. Since $H$ is a tree, we can write its vertices as
    \[
    x_1^{(0)},x_1^{(1)},\dots, x_{n_1}^{(1)}, x_1^{(2)},\dots,x_{n_2}^{(2)},\dots 
    \]
    where the distance between $x_{i}^{(j)}$ and $x_1^{(0)}$ is exactly $j$, for $1\leq i\leq n_j$. We define the \emph{rank} function to be
    \begin{align*}
        \rank: V&\to \mathbb{Z}\\
        x_i^{(j)}&\mapsto j.
    \end{align*}
    We will also view $H$ as a rooted tree with root at $x_1^{(0)}$. We remark that any vertex $x_i^{(j)}$ has a unique \emph{predecessor}, i.e., a vertex $x_{k}^{(j-1)}$ such that $x_{k}^{(j-1)}x_{i}^{(j)}$ is an edge of $H$.
    
    By definition, any $m\in \mingens(I(\mathcal{H}))$ can be written as
    \[
    m=x_{i_1}^{(j)}x_{i_2}^{(j+1)}\cdots x_{i_k}^{(j+k-1)}.
    \]
    Thus, we have a well-defined function
    \begin{align*}
        \min: \mingens(I(\mathcal{H}))&\to \mathbb{Z}\\
        m&\mapsto \min\{j\in \mathbb{Z} ~\big\vert~ x_i^{(j)}\mid m \text{ for some i}\}.
    \end{align*}
    Consider a total order $(\succ)$ on $\mingens(I(\mathcal{H}))$ where for any $m,m'\in \mingens(I(\mathcal{H}))$, $\min(m)<\min(m')$ implies that $m\succ m'$. We remark that there are multiple such total orders. We will show that $I(\mathcal{H})$ is bridge-friendly with respect to $(\succ)$. Due to~\cite[Lemma 2.9]{CHM24-first} and~\cite[Remark 2.12]{CHM24-first}, it suffices to show that whenever there exist $m_1,m_2,m_3\in \mingens(I(\mathcal{H}))$ such that
    \begin{itemize}
        \item $y\mid m_1,m_3$, $y\nmid m_2$, and
        \item $z\mid m_2,m_3$, $z\nmid m_1$
    \end{itemize}
    for some distinct vertices $y,z$ of $\mathcal{H}$, we have $m_3 \succ m_1$ or $m_3\succ m_2$. By the above definition, it suffices to show that under these hypotheses, we have $\min(m_3)< \min(m_1)$ or $\min(m_3)<\min(m_2)$. 
    
    Without loss of generality, as $\mathcal{H}$ is a rooted hypertree, we can assume $\rank y>\rank z$. Because each vertex has a unique predecessor in $H$,   if $x_i^{(j)}$ divides both $m_1$ and $m_3$ for some $i$ and $j$, then so does any vertex $x$ that divides $m_3$ and $\max\{\min(m_1),\min(m_3)\} \leq \rank x \leq j$. In particular, since $z\mid m_2,m_3$ and $z\nmid m_1$, we have 
    \[
    \max\{\min(m_2),\min(m_3)\}\leq \rank z< \max\{\min(m_1),\min(m_3)\}.
    \]
    Thus $\max\{\min(m_1),\min(m_3)\}\neq \min(m_3)$. In particular, this means $\min(m_3)< \min(m_1)$, as ~claimed.
\end{proof}

As immediate consequences of Theorem~\ref{thm:rooted-hypertree-friendly}, we recover the following results; see~\cite{BW02, CKW24} for necessary terminology.

\begin{corollary}[\protect{\cite[Theorem 3.17]{BW02} and \cite[Theorem 3.8]{CKW24}}]
    The path ideal of a path and edge ideal of a tree have a cellular minimal  resolution.
\end{corollary}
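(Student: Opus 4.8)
The plan is to invoke \cite[Theorem 2.29]{CK24}, which says a bridge-friendly monomial ideal has a minimal Barile-Macchia resolution, so that it suffices to exhibit a total order on $\mingens(I(\mathcal{H}))$ witnessing bridge-friendliness. By the characterizations of \cite[Lemma 2.9]{CHM24-first} and \cite[Remark 2.12]{CHM24-first}, bridge-friendliness with respect to a total order $(\succ)$ reduces to the following triple condition: whenever $m_1,m_2,m_3\in\mingens(I(\mathcal{H}))$ admit distinct vertices $y,z$ with $y\mid m_1,m_3$, $y\nmid m_2$, $z\mid m_2,m_3$, $z\nmid m_1$, one has $m_3\succ m_1$ or $m_3\succ m_2$. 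So the entire proof comes down to choosing a good total order and verifying this one condition.

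First I would fix the root $x$ and a host tree $H$, and set $\rank(v)=\dist_H(v,x)$. The key structural observation, to be established at the outset, is that each minimal generator $m$ corresponds to a path in $H$ occupying a set of \emph{consecutive} ranks: the vertex set of the edge defining $m$ induces a connected subgraph of the tree $H$ in which all ranks are distinct; since adjacent vertices of $H$ differ in rank by exactly $1$ and distinctness of ranks rules out branching, this induced subgraph is a path through consecutive ranks. In particular $m$ has exactly one vertex of each rank in its interval $[\min(m),\max(m)]$, and the rank-$(r-1)$ vertex of $m$ is the (unique) predecessor in $H$ of its rank-$r$ vertex.

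Next I would let $\min(m)$ be the smallest rank occurring among the vertices dividing $m$, and pick any total order $(\succ)$ refining the rule ``$\min(m)<\min(m')\Rightarrow m\succ m'$''. With this choice the triple condition follows once we show $\min(m_3)<\min(m_1)$ or $\min(m_3)<\min(m_2)$. The crucial tool is an \emph{agreement lemma}: if some vertex of rank $j$ divides both $m$ and $m'$, then $m$ and $m'$ share the same vertex of rank $r$ for every $r$ with $\max\{\min(m),\min(m')\}\le r\le j$; this is proved by descending induction starting from rank $j$, using the predecessor description above. Now, since the ranks within $m_3$ are pairwise distinct we have $\rank y\ne\rank z$, and by the symmetry of the desired conclusion in $m_1,m_2$ we may assume $\rank y>\rank z$. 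If $\rank z\ge\max\{\min(m_1),\min(m_3)\}$, then the agreement lemma applied to $m_1,m_3$ (via $y\mid m_1,m_3$) forces $z\mid m_1$, a contradiction; hence $\rank z<\max\{\min(m_1),\min(m_3)\}$. On the other hand, $z\mid m_2,m_3$ yields $\rank z\ge\max\{\min(m_2),\min(m_3)\}$. Comparing, we get $\max\{\min(m_1),\min(m_3)\}>\min(m_3)$, i.e.\ $\min(m_1)>\min(m_3)$, so $m_3\succ m_1$, completing the verification.

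The genuinely substantive points are the right choice of total order (decreasing in the minimal rank) and the agreement lemma extracted from the unique-predecessor structure of the host tree; everything else — the path description of generators, the descending induction, and chasing the inequalities among the various $\min(\cdot)$ values and ranks — is routine bookkeeping. The one thing I would be careful to pin down is that the cited results of \cite{CHM24-first} indeed collapse bridge-friendliness to precisely the triple condition stated above; past that, I anticipate no real obstacle.
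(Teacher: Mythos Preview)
Your proposal is correct and mirrors the paper's argument essentially line for line: the paper proves (as Theorem~\ref{thm:rooted-hypertree-friendly}) that every rooted hypertree is bridge-friendly using the same rank function, the same $\min$-based total order, the same reduction to the triple condition via \cite[Lemma~2.9 and Remark~2.12]{CHM24-first}, and the same unique-predecessor ``agreement'' observation, and then records the corollary as an immediate consequence since trees and paths (with their path ideals) are rooted hypertrees. Your explicit ``agreement lemma'' and the ensuing inequality chase are precisely the content of the paper's one-paragraph verification, so there is no substantive difference.
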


Edge ideals of rooted hypertrees also include the path ideals of \emph{rooted} trees considered in~\cite{BHO11}. The minimal free resolution of path ideals of rooted trees was described in~\cite{BHO11} using the mapping cone construction. Theorem~\ref{thm:rooted-hypertree-friendly} allows us to recover the minimal free resolutions of path ideals of rooted trees using discrete Morse theory, and thus has more implications.

\begin{corollary}
    The path ideal of a rooted tree has a cellular minimal resolution.
\end{corollary}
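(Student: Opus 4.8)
The plan is to realize the path ideal of a rooted tree as the edge ideal of a rooted hypertree, and then quote Theorem~\ref{thm:rooted-hypertree-friendly} together with Remark~\ref{rem:characteristic-free}. Recall the setup (see~\cite{BHO11}): let $\Gamma$ be a rooted tree with root $r$ on vertex set $\{x_1,\dots,x_d\}$, and fix an integer $t\ge 2$; the path ideal $I_t(\Gamma)\subseteq S$ is generated by the monomials $x_{i_1}x_{i_2}\cdots x_{i_t}$ such that $(x_{i_1},\dots,x_{i_t})$ is a directed path in $\Gamma$, i.e.\ each $x_{i_{\ell+1}}$ is a child of $x_{i_\ell}$.

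First I would introduce the hypergraph $\H=(V(\Gamma),\E)$ whose edges are exactly the vertex sets of the directed $t$-paths of $\Gamma$. Every generator of $I_t(\Gamma)$ is squarefree of the same degree $t$, so no generator divides another; hence these generators are precisely the minimal generators, $\H$ is a simple hypergraph, and $I(\H)=I_t(\Gamma)$. Next I would verify that $\H$ is a rooted hypertree with root $r$, using $\Gamma$ itself (with orientations forgotten) as the host graph. It is a tree; and for each edge $e\in\E$, which is the vertex set of some directed path $P$, the induced subgraph of $\Gamma$ on $e$ is $P$, which is connected, so $\Gamma$ is indeed a host graph of $\H$. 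Moreover, along a directed path the distance to $r$ in $\Gamma$ strictly increases by one at each step, so the vertices of $e$ have pairwise distinct distances from $r$. This is exactly the condition of Definition~\ref{def.rootedtree}, so $\H$ is rooted at $r$.

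Finally, Theorem~\ref{thm:rooted-hypertree-friendly} applies: $I(\H)=I_t(\Gamma)$ is bridge-friendly and therefore has a minimal Barile-Macchia resolution, which is a Morse resolution and hence cellular by Remark~\ref{rem:characteristic-free}. This gives the assertion. There is no real obstacle: all the substantive work is contained in Theorem~\ref{thm:rooted-hypertree-friendly}, and the only thing to check is that $\Gamma$ serves as a host graph realizing the rooted-hypertree structure, which is immediate from the definition of a directed path. (One may also note that the path ideal of a path is the special case where $\Gamma$ is a path rooted at an endpoint, recovering the earlier corollary.)
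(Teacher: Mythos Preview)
Your proposal is correct and follows exactly the intended route: the paper deduces this corollary directly from Theorem~\ref{thm:rooted-hypertree-friendly} by observing that the path ideal of a rooted tree is the edge ideal of a rooted hypertree (with the underlying tree as host graph and the root as in Definition~\ref{def.rootedtree}), and cellularity then comes from Remark~\ref{rem:characteristic-free}. Your write-up simply makes explicit the verifications that the paper leaves implicit.
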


\begin{remark}
    Not all hypertrees are rooted. Indeed, one can check that the hypertree in Example~\ref{ex:hypertree-non-rooted} is not rooted. 
\end{remark}

Depending on the structure of the rooted hypertrees, one can also deduce (or recover) formulas for (total and graded) Betti numbers, for example, those given in~\cite[Theorem 3.17]{CK24} and~\cite[Theorem 2.7]{BHO11}. 


\bibliographystyle{amsplain}
\bibliography{refs}
\end{document}